\newtheorem{thm}{Theorem}[section]
\newtheorem{prop}[thm]{Proposition}
\newtheorem{lemma}[thm]{Lemma}
\newtheorem{cor}[thm]{Corollary}
\theoremstyle{definition}
\newtheorem{defin}[thm]{Definition}
\theoremstyle{remark}
\newtheorem{rmk}[thm]{Remark}
\numberwithin{equation}{section}
\newcommand{\Q}{\mathbb Q}
\newcommand{\E}{\mathbb E}
\newcommand{\F}{\mathbb F}
\newcommand{\C}{\mathbb C}
\newcommand{\Z}{\mathbb Z}
\renewcommand{\P}{\mathbb P}
\newcommand{\Spec}{\operatorname{Spec}}
\newcommand{\mc}[1]{\mathcal{#1}}
\newcommand{\cl}{\overline}
\newcommand{\set}[1]{\left\{#1\right\}}
\renewcommand{\phi}{\varphi}
\newcommand{\on}[1]{\operatorname{#1}}
\newcommand{\ang}[1]{\left \langle{#1}\right \rangle}
\title[Varieties over $\cl{\Q}$ with infinite Chow groups modulo almost all primes]{Varieties over $\cl{\Q}$ with infinite Chow groups modulo almost all primes}
\address{Department of Mathematics\\
	University of California Los Angeles\\
	CA 90095 \\
	United States of America}
\author{Federico Scavia}
\email{scavia@math.ucla.edu}
\date{June 2023}
\subjclass[2020]{14C15; 14C25, 14K05, 14K22}
\begin{document}

	\begin{abstract}
		Let $E$ be the Fermat cubic curve over $\cl{\Q}$. In 2002, Schoen proved that the group $CH^2(E^3)/\ell$ is infinite for all primes $\ell\equiv 1\pmod 3$. We show that $CH^2(E^3)/\ell$ is infinite for all prime numbers $\ell> 5$. This gives the first example of a smooth projective variety $X$ over $\overline{\mathbb{Q}}$ such that $CH^2(X)/\ell$ is infinite for all but at most finitely many primes $\ell$. A key tool is a recent theorem of Farb--Kisin--Wolfson, whose proof uses the prismatic cohomology of Bhatt--Scholze.
	\end{abstract}
	
	\maketitle

	\section{Introduction}

	Let $k$ be a field and $X$ be a smooth projective variety over $k$. If $k$ is a number field, the motivic Bass conjecture predicts that the Chow groups $CH^i(X)$ are finitely generated for all $i\geq 0$. In particular, $CH^i(X)/n$ should be finite for all $i\geq 0$ and all integers $n\geq 1$. 
	
	Suppose now that $k$ is an algebraically closed field of characteristic zero. The groups $CH^i(X)/n$ are finite if $i\in\set{0,1,\dim(X)}$. In response to a question of Colliot-Th\'el\`ene, Schoen \cite{schoen2002complex} proved that $CH^i(X)/n$ can be infinite for any $i$ such that $2\leq i\leq \dim(X)-1$. Recall that the Fermat cubic curve is the elliptic curve $E \subset \P^2_{\Q}$ given by the homogeneous equation $x_0^3 + x_1^3+x_2^3=0$.
	
	\begin{thm}[Schoen]\label{schoen-thm}
		Let $E\subset \P^2_{\Q}$ be the Fermat cubic curve. Then $CH^2(E^3_F)/\ell$ is infinite for all primes $\ell\equiv 1 \pmod 3$ and all algebraically closed fields $F$ of characteristic zero.
	\end{thm}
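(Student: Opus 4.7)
The plan is to reduce to $F=\C$, exploit the complex multiplication $\mathrm{End}(E)=\Z[\zeta_3]$ together with the splitting of $\ell$ in $\Z[\zeta_3]$ to produce many codimension $2$ cycles on $E^3$, and detect their linear independence modulo $\ell$ via an $\ell$-adic Abel--Jacobi invariant that refines through the CM-decomposition.

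\textbf{Step 1 (Reduction to $\C$).} For any two algebraically closed fields $\bar{\Q}\subset F\subset F'$ of characteristic zero the flat pullback $CH^2(E^3_{F})\to CH^2(E^3_{F'})$ is injective, so it suffices to treat $F=\C$; the countable cofinality of $\bar\Q$ in any such $F$ makes this a standard spreading-out argument.

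\textbf{Step 2 (CM and cycle construction).} Since $\ell\equiv 1\pmod 3$, the prime $\ell$ splits in $\Z[\zeta_3]$ as $\ell=\pi\bar\pi$. The endomorphism $[\pi]\in \mathrm{End}(E)$ is a cyclic isogeny of degree $\ell$, and iterating gives a tower $[\pi^n]\colon E\to E$ with kernels $E[\pi^n]$ of order $\ell^n$. For a seed cycle on $E^3$ (a modified diagonal of Gross--Schoen type, such as
$$Z \;=\; \Delta_{123}\;-\;\Delta_{12}\times\{0\}\;-\;\Delta_{13}\times\{0\}\;-\;\Delta_{23}\times\{0\}\;+\;\text{(single-point correction)},$$
which is known to be homologically trivial) I would form the translates $Z_t:=(\tau_t\times\mathrm{id}\times\mathrm{id})^*Z$ for $t\in E[\pi^n]$, together with their $\Z[\zeta_3]$-twists by $(\pi^{n})\times\mathrm{id}\times\mathrm{id}$. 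These generate a large subgroup $\Lambda_n\subset CH^2(E^3)_{\mathrm{hom}}$ equipped with an action of $\Z[\zeta_3]/\pi^n$.

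\textbf{Step 3 (Detection).} I would apply the $\ell$-adic Abel--Jacobi map
$$\mathrm{AJ}_\ell\colon CH^2(E^3)_{\mathrm{hom}}\otimes \Z_\ell \longrightarrow H^1\bigl(\C,\,H^3_{\mathrm{\acute{e}t}}(E^3,\Z_\ell(2))\bigr),$$
or equivalently the transcendental Griffiths map to the intermediate Jacobian $J^2(E^3_{\C})$. The target decomposes under $\Z[\zeta_3]$ and, by the K\"unneth formula on $H^3(E^3)$, one isolates a $\pi$-isotypic summand whose mod-$\ell$ reduction receives the classes of the $Z_t$ essentially via a pairing of the form $t\mapsto \sum_{\chi}\chi(t)$, where $\chi$ runs over the appropriate characters of $E[\pi^n]$. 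Because $\ell$ is split, the characters valued in $\mathbf{F}_\ell=\Z[\zeta_3]/\pi$ separate the points of $E[\pi^n]$, giving unboundedly many independent classes in $CH^2(E^3_{\C})/\ell$ as $n$ grows.

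\textbf{Main obstacle.} The decisive (and nontrivial) step is the explicit computation in Step~3: showing that for the concrete seed cycle the refined Abel--Jacobi invariant on the $\pi$-isotypic component is genuinely nonzero mod~$\ell$. This amounts to a nonvanishing of a period / $\ell$-adic regulator attached to the CM Hodge structure of $E$, and the hypothesis $\ell\equiv 1\pmod 3$ enters precisely to ensure $\pi\not\equiv \bar\pi\pmod\ell$, so that the two CM eigencomponents of $H^1(E,\Z_\ell)$ remain distinct modulo $\ell$ and no cancellation collapses the invariant.
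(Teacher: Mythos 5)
There is a genuine gap, and it is precisely the one that makes this theorem hard. Your Step 3 proposes to detect classes in $CH^2(E^3)/\ell$ via the $\ell$-adic Abel--Jacobi map over $\C$, landing in $H^1(\C, H^3(E^3,\Z_\ell(2)))$, or equivalently in the intermediate Jacobian $J^2(E^3_\C)$. But $\mathrm{Gal}(\C/\C)$ is trivial, so $H^1(\C,-)=0$; and $J^2(E^3_\C)$ is a divisible group, so $J^2(E^3_\C)/\ell = 0$. Neither target can see anything modulo $\ell$. More to the point: even if you detect your cycles via Abel--Jacobi over some number field $K$ (where the Galois cohomology is nontrivial), the real difficulty is to show the resulting classes do not become divisible by $\ell$ after base change to $\bar\Q$. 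This $\ell$-divisibility question over the algebraic closure is the heart of the matter, and your proposal never addresses it. The mechanism Schoen uses (following Bloch--Esnault--Colliot-Th\'el\`ene) is to show that the coniveau group $N^1H^3(\cdot,\Z/\ell(2))$ of the relevant threefold is small enough; the input is the Bloch--Esnault theorem, and this is exactly where good \emph{ordinary} reduction at $\ell$, hence the condition $\ell\equiv 1\pmod 3$, enters Schoen's original argument. The ``Main obstacle'' you flag (nonvanishing of a refined AJ period) is a real issue but is secondary to this one.

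A further structural divergence: the argument in the literature does not proceed on $E^3$ directly. Schoen works with the smooth resolution $W$ of the self-fiber product of the universal elliptic curve with level 3 structure, and exploits the fibration $q\colon W\to\P^1$ and the sheaf $\mathcal{M}=j_*\mathrm{Sym}^2 R^1\pi_{U*}\Z/\ell(2)$ to isolate the exact piece of $H^3$ where the Abel--Jacobi images live, then uses a tower of number fields $K_i$ and decomposition groups $D_i$ (via reductions at a well-chosen prime $p\ne\ell$ and carefully constructed CM fibers) to produce infinitely many cycles whose detection maps $\Phi_i$ are ``triangular.'' Only at the very end is the result transferred to $E^3$ by a dominant rational map $E^3\dashrightarrow W$ of $3$-power degree. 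Your modified diagonals and $\pi^n$-torsion translates on $E^3$ are interesting cycles, but you give no mechanism to keep them from becoming $\ell$-divisible over $\bar\Q$, which is where the proposal fails.
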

	Note that the primes $\ell\equiv 1\pmod 3$ are exactly the primes of good ordinary reduction for $E^3$. By the Rigidity Theorem of Lecomte and Suslin \cite{lecomte1986rigidite}, in order to prove \Cref{schoen-thm} one may assume that $F=\cl{\Q}$.
	
	After \Cref{schoen-thm}, many examples of complex varieties with infinite Chow groups modulo primes were found. Rosenschon--Srinivas \cite{rosenschon2010griffiths} proved that for a very general principally polarized complex abelian threefold $A$, there exists a constant $\ell_0>0$ such that $CH^2(A)/\ell$ is infinite for all primes $\ell\geq \ell_0$, but gave no upper bound on $\ell_0$. Totaro \cite{totaro2016complex} later showed that $CH^2(A)/\ell$ is infinite for all prime numbers $\ell$, thus exhibiting the first example of a smooth projective complex variety $X$ such that $CH^2(X)/\ell$ is infinite for all primes $\ell$. This was recently generalized by Diaz \cite{diaz2021nondivisible}, who proved that $CH^i(E_1\times\dots \times E_n)/\ell$ is infinite for all $n\geq 3$, all $2\leq i\leq n-1$, and all very general complex elliptic curves $E_1,\dots,E_n$. There are also very recent examples of complex fivefolds $X$ such that the Griffiths group $\on{Griff}^3(X)$ has infinite $\ell$-torsion, and in fact even $\on{Griff}^3(X)[\ell]/\ell \on{Griff}^3(X)[\ell^2]$ is infinite: this is due to Schreieder \cite{schreieder2020infinite} for $\ell=2$, and to Alexandrou \cite{alexandrou2023torsion} for $\ell>2$. Alexandrou's paper relies on Schreieder's, for example on the injectivity theorem \cite[Theorem 6.1]{schreieder2020infinite} that works for surfaces admitting certain special degenerations, and hence works for arbitrary integers. Alexandrou's contribution was to construct such surfaces, generalizing the construction used by Schreieder when $\ell=2$.
	
	All the previous examples, with the exception of Schoen's, depend on the existence of parameters in their field of definition, and so leave the following question open: Is there a smooth projective variety $X$ over $\cl{\Q}$ such that $CH^2(X)/\ell$ is infinite for all sufficiently large $\ell$? In particular, the question of whether the group $CH^3(E^3_{\cl{\Q}})/\ell$ is infinite for the Fermat cubic $E/\Q$ and all sufficiently large primes numbers $\ell$ had remained open. We answer these questions affirmatively.
	
	\begin{thm}\label{thm-fermat}
		Let $E\subset \P^2_{\Q}$ be the Fermat cubic curve. Then $CH^2(E^3_F)/\ell$ is infinite for all primes $\ell> 5$ and all algebraically closed fields $F$ of characteristic zero.
	\end{thm}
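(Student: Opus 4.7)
The plan is to handle the primes left open by Schoen's \Cref{schoen-thm}, namely the primes $\ell>5$ with $\ell\equiv 2\pmod 3$, which are exactly the primes of good supersingular reduction for $E$. By the Lecomte--Suslin rigidity theorem it suffices to treat $F=\cl{\Q}$, so fix such a prime $\ell$.

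The idea is to pass to a supersingular characteristic-$p$ reduction of $E$, where the endomorphism ring is much richer, produce a large supply of cycles there, and transfer them back to $\cl{\Q}$. Choose a rational prime $p\ne \ell$ of good reduction for $E$ with $p\equiv 2\pmod 3$, so that $\mc{E}:=E_{\F_p}$ is supersingular and $\on{End}(\mc{E}_{\cl{\F}_p})$ is a maximal order in the quaternion algebra $B_{p,\infty}$, considerably larger than the CM ring $\Z[\zeta_3]$ available over $\cl{\Q}$. Using these extra endomorphisms and the cycles constructed from them (in the spirit of Schoen's ordinary construction, but applied to the quaternionic structure), one should produce a countable family of classes in $CH^2(\mc{E}^3_{\cl{\F}_p})/\ell$ whose images in $\ell$-adic \'etale cohomology are linearly independent; in particular $CH^2(\mc{E}^3_{\cl{\F}_p})/\ell$ is infinite.

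The decisive step is to pull this abundance back to characteristic zero: one wants the specialization map $CH^2(E^3_{\cl{\Q}})/\ell\to CH^2(\mc{E}^3_{\cl{\F}_p})/\ell$ to have infinite image. This is where the Farb--Kisin--Wolfson theorem enters. Via its prismatic and $p$-adic comparison statements, it should supply the control needed on how cohomology classes on the supersingular fiber lift to classes over $\cl{\Q}$, equivalently on the cokernel of specialization in the relevant \'etale and crystalline cohomology groups. Combined with the compatibility of the $\ell$-adic Abel--Jacobi map with reduction mod $p$, this would show that enough of the characteristic-$p$ cycle classes are realized by cycles on $E^3$ over $\cl{\Q}$, forcing $CH^2(E^3_{\cl{\Q}})/\ell$ to be infinite.

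The principal obstacle I expect is the interaction between the two primes: FKW is fundamentally a statement about $p$-adic cohomology via prismatic methods, whereas the conclusion concerns $CH^2/\ell$ with $\ell\ne p$. A careful bridging step, relating the prismatic/crystalline data on the supersingular fiber to the $\ell$-adic Abel--Jacobi picture used by Schoen, will be necessary to translate the FKW input into the lifting of sufficiently many mod $\ell$ cycle classes. A secondary difficulty is that Schoen's original argument exploits the splitting of $\ell$ in $\Z[\zeta_3]$ in a way that has no direct analogue when $\ell$ is inert; designing the right family of quaternionic cycles on $\mc{E}^3_{\cl{\F}_p}$ and checking their linear independence modulo $\ell$ is the concrete piece of work underlying the plausible outline above.
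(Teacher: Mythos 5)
Your outline does not work, and the central obstruction is that it runs in the wrong direction. You propose to produce infinitely many classes in $CH^2(\mc{E}^3_{\cl{\F}_p})/\ell$ for a supersingular reduction $\mc{E}=E_{\F_p}$ and then lift them to characteristic zero. But Schoen himself proved (this is \cite[Theorem~(0.2)(ii)]{schoen2002complex}, quoted in the introduction) that $CH^2((E_p)^3_L)/\ell$ is \emph{finite} for every algebraically closed field $L$ of characteristic $p$ and every $\ell\neq p$. So the group you want to show is infinite is in fact provably finite, and the quaternionic abundance of endomorphisms on the supersingular fiber does not help. Moreover, even setting this aside, there is no specialization map from Chow groups in characteristic $p$ to Chow groups in characteristic zero: specialization goes the other way, and an infinite target would say nothing about the source. ``Lifting'' cycle classes from $\cl{\F}_p$ to $\cl{\Q}$ is not something one can do, and the Farb--Kisin--Wolfson theorem does not provide it.

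The actual use of Farb--Kisin--Wolfson in the paper is quite different and resolves the concern you raise about the interaction between the two primes by eliminating it: the theorem is invoked with its prime $p$ taken equal to $\ell$. One works not with $E^3$ directly but with the auxiliary smooth projective threefold $W$ (a desingularized self-fiber product of the universal elliptic curve with level-$3$ structure), which admits a dominant $3$-power-degree rational map from $E^3_{\cl{\Q}}$. Since $W$ has unramified good reduction at every prime $\geq 5$ and $H^0(W,\Omega^3)\neq 0$, Theorem~\ref{fkw} applied at $p=\ell$ gives $N^1H^3(W_{\cl{\Q}},\Z/\ell)\neq H^3(W_{\cl{\Q}},\Z/\ell)$; combining this with the mod-$\ell$ irreducibility of $H^3(W_{\cl{\Q}},\Z/\ell(2))$ when $\ell\equiv 2\pmod 3$ (via an order-$3$ automorphism with no eigenvalue $1$), one gets $N^1H^3(W_{\cl{\Q}},\Z/\ell(2))=0$. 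This coniveau vanishing is precisely what Bloch--Esnault supplied in the ordinary case, and it is the single new ingredient; the rest of the argument is Schoen's machinery of CM cycles, the $\ell$-adic Abel--Jacobi map, and Schoen sequences at an auxiliary prime $p\neq\ell$ used to detect $\ell$-linear independence. A prime of good reduction still enters, but to prove independence of cycle classes already constructed over $\cl{\Q}$, not to manufacture cycles in characteristic $p$.
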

	The conclusion of \Cref{thm-fermat} is false in positive characteristic. Indeed, let $p\neq 3$ and $\ell\neq p$ be prime numbers, and write $E_p$ for the reduction of $E$  modulo $p$. By \cite[Theorem (0.2)(ii)]{schoen2002complex}, for every algebraically closed field $L$ of characteristic $p$, the group $CH^2((E_p)^3_L)/\ell$ is finite.
	
	It follows from \Cref{thm-fermat} and \cite[Remarks (14.1) and (14.2)]{schoen2002complex} that the quotient modulo $\ell$ of the Griffiths group $\on{Griff}^2(E^3_F)/\ell$ and the unramified cohomology group $H^3_{\on{nr}}(F(E^3),\Z/\ell(2))$ are also infinite for all primes $\ell> 5$ and all algebraically closed fields $F$ of characteric zero.
	
	The combination of \Cref{thm-fermat} with the existing literature gives several remarkable corollaries.
	
	\begin{cor}\label{standard-cor}
		For each $n \geq 3$, there is a smooth projective $n$-fold $X$ over $\Q$ such that $CH^i(X_{\cl{\Q}})/\ell$ is infinite for all $2 \leq i \leq n - 1$ and all prime numbers $\ell>5$.
	\end{cor}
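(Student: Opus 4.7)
The plan is to build $X$ from $E^3$ by taking a product with a projective space, so as to spread the infiniteness in codimension $2$ across all codimensions from $2$ to $n-1$ via the projective bundle formula.

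\smallskip

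Explicitly, for $n \geq 3$ I would set $X := E^3 \times_\Q \P^{n-3}_\Q$, a smooth projective $n$-fold over $\Q$. The projective bundle formula gives a canonical isomorphism
\[
CH^i(X_{\cl{\Q}}) \;\cong\; \bigoplus_{k=0}^{n-3} CH^{i-k}(E^3_{\cl{\Q}}),
\]
where the $k$-th summand is the image of multiplication by $H^k$ for $H$ the hyperplane class pulled back from $\P^{n-3}$, with the convention that $CH^j = 0$ for $j<0$. Reducing mod $\ell$ preserves this direct sum decomposition, so
\[
CH^i(X_{\cl{\Q}})/\ell \;\cong\; \bigoplus_{k=0}^{n-3} CH^{i-k}(E^3_{\cl{\Q}})/\ell.
\]
For any $i$ in the range $2 \leq i \leq n-1$, the index $k = i-2$ lies in $\{0,1,\dots,n-3\}$, so one of the direct summands is $CH^2(E^3_{\cl{\Q}})/\ell$, which is infinite for all primes $\ell>5$ by \Cref{thm-fermat}. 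Hence $CH^i(X_{\cl{\Q}})/\ell$ is infinite in the desired range.

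\smallskip

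There is really no substantive obstacle: the only point to verify is the projective bundle formula with $\Z$-coefficients, which is standard (Grothendieck), together with its compatibility with passage to $\cl{\Q}$ and with reduction mod $\ell$, both of which are immediate since the relevant Chow groups of $\P^{n-3}_{\cl{\Q}}$ are free abelian on the powers of $H$. The case $n=3$ is simply \Cref{thm-fermat} applied to $X = E^3$.
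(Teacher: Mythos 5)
Your proposal is correct and matches the paper's proof exactly: the paper also takes $X = E^3 \times_\Q \P^{n-3}_\Q$ and invokes the projective bundle formula together with \Cref{thm-fermat}. You have simply spelled out the direct sum decomposition and the index bookkeeping that the paper leaves implicit.
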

	Indeed, by \Cref{thm-fermat} and the projective bundle formula for Chow groups, one may take $X=E^3\times_{\Q}\P^{n-3}_{\Q}$. 
	
	\begin{cor}\label{standard-cor2}
		Let $\Q(t)$ be a purely transcendental extension of $\Q$, of transcendence degree $1$. For all $n \geq 4$, there is a smooth projective $n$-fold $X$ over $\Q(t)$ such that $CH^i(X_{\cl{\Q(t)}})[\ell]$ is infinite for all $3 \leq i \leq n - 1$ and all prime numbers $\ell> 5$.
	\end{cor}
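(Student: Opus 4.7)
The plan is to derive \Cref{standard-cor2} from \Cref{standard-cor} by a product construction with an elliptic curve, following the method used by Totaro \cite{totaro2016complex} in the analogous situation over $\C(t)$. Apply \Cref{standard-cor} in dimension $n-1$ to obtain a smooth projective $(n-1)$-fold $Y/\Q$ such that $CH^{j}(Y_{\cl{\Q}})/\ell$ is infinite for every $2 \leq j \leq n-2$ and every prime $\ell > 5$; by Lecomte-Suslin rigidity \cite{lecomte1986rigidite}, the same infiniteness holds for $CH^{j}(Y_{\cl{\Q(t)}})/\ell$. Then set
\[
X := Y_{\Q(t)} \times_{\Q(t)} E_{\Q(t)},
\]
where $E \subset \P^2_\Q$ is the Fermat cubic (any elliptic curve over $\Q$ would suffice); this $X$ is a smooth projective $n$-fold over $\Q(t)$.

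Fix a prime $\ell > 5$ and an integer $3 \leq i \leq n-1$. For any nonzero $\ell$-torsion point $p \in E[\ell](\cl{\Q})$, the divisor class $\beta_p := [p] - [0] \in \on{Pic}^0(E_{\cl{\Q(t)}}) \subset CH^1(E_{\cl{\Q(t)}})$ satisfies $\ell \beta_p = 0$. For each $\alpha \in CH^{i-1}(Y_{\cl{\Q(t)}})$, the external product $\alpha \times \beta_p \in CH^i(X_{\cl{\Q(t)}})$ is therefore $\ell$-torsion; and since $(\ell \alpha) \times \beta_p = \alpha \times (\ell \beta_p) = 0$, the assignment $\alpha \mapsto \alpha \times \beta_p$ descends to a homomorphism
\[
\Phi_p : CH^{i-1}(Y_{\cl{\Q(t)}})/\ell \longrightarrow CH^i(X_{\cl{\Q(t)}})[\ell].
\]
Since the source is infinite, it would suffice to show $\Phi_p$ has infinite image.

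The main obstacle is this last point. Because the mod-$\ell$ cycle class map $CH^{i-1}(Y_{\cl{\Q(t)}})/\ell \to H^{2i-2}(Y_{\cl{\Q(t)}}, \Z/\ell(i-1))$ has finite image (the target is finite), the infiniteness of $CH^{i-1}/\ell$ furnished by \Cref{standard-cor} is genuinely transcendental: it is witnessed by a deeper cohomological invariant, namely an unramified cohomology group of $Y$. Indeed, as noted after \Cref{thm-fermat}, $H^3_{\on{nr}}(\cl{\Q(t)}(Y), \Z/\ell(2))$ is infinite when $i=3$; the case of larger $i$ reduces to this via the explicit projective bundle structure of the variety produced by \Cref{standard-cor}. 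I would then combine Künneth compatibility of unramified cohomology, the Kummer class of $p$ in $H^1(E_{\cl{\Q(t)}}, \mu_\ell) = E[\ell]^{\vee}$, the Bloch-Ogus coniveau spectral sequence, and the Bloch map for $\ell$-torsion cycles to translate the infinite unramified cohomology on $Y$ into infinitely many distinct $\ell$-torsion classes $\Phi_p(\alpha) \in CH^i(X_{\cl{\Q(t)}})[\ell]$. Verifying this chain of compatibilities, in particular that external Chow products commute with the cohomological Künneth decomposition and detect distinct images modulo rational equivalence, is the technical crux of the argument.
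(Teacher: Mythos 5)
There is a genuine gap, and it is in the very first move: you take the elliptic curve factor to be $E_{\Q(t)}$, the base change of the Fermat cubic (or, as you say, ``any elliptic curve over $\Q$''). The paper instead takes $C$ to be an elliptic curve over $\Q(t)$ whose $J$-invariant is \emph{transcendental} over $\Q$, and then cites Schoen's theorem \cite[Theorem (0.2)]{schoen2000certain}, which supplies the injection $CH^i(Y_{\cl{\Q}})/\ell \hookrightarrow CH^{i+1}(X_{\cl{\Q(t)}})[\ell]$ in one step. The transcendence hypothesis is not cosmetic. With your isotrivial choice, $X = Y_{\Q(t)}\times_{\Q(t)} E_{\Q(t)}$ is the base change to $\Q(t)$ of the $\Q$-variety $Y\times_{\Q} E$, so $X_{\cl{\Q(t)}} = (Y\times E)_{\cl{\Q(t)}}$ comes from a variety defined over $\cl{\Q}$. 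Rigidity \cite{lecomte1986rigidite} then identifies $CH^i(X_{\cl{\Q(t)}})[\ell]$ with $CH^i\bigl((Y\times E)_{\cl{\Q}}\bigr)[\ell]$, i.e.\ with the $\ell$-torsion of a Chow group over $\cl{\Q}$, and there is no known mechanism that makes \emph{that} group infinite; indeed, producing such an example over $\cl{\Q}$ would be a new theorem, not a corollary of \Cref{standard-cor}. The whole point of passing to $\Q(t)$ and choosing a non-isotrivial $C$ is to escape this rigidity trap: Schoen's argument exploits that $C$ specializes to many non-isomorphic elliptic curves over $\cl{\Q}$ in order to show the external product with the $\ell$-torsion divisor is injective.

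Your second paragraph correctly identifies the crux (showing $\Phi_p$ has infinite image), but the list of tools you invoke (K\"unneth for unramified cohomology, Bloch--Ogus, the Bloch map) is a sketch, not a proof, and the compatibilities you defer to ``verify'' are exactly where the argument would break down for an isotrivial $C$. The efficient route is the one the paper takes: choose $C/\Q(t)$ with $J(C)$ transcendental over $\Q$ and apply \cite[Theorem (0.2)]{schoen2000certain} directly, which packages all of those compatibilities (and the essential use of the transcendental $J$-invariant) into one citation.
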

	The condition $i\neq 2$ is necessary: if $V$ is a smooth projective variety over an algebraically closed field $F$ of characteristic zero, the group $CH^2(V)[\ell]$ is finite for all primes $\ell$. This follows from the Merkurjev--Suslin Theorem; see \cite[Corollary (18.1)]{merkurjev1982k}.

	\begin{cor}\label{cor3}
		Let $K=\Q(\zeta_\ell)(t)$, where $\zeta_{\ell}$ is a primitive $\ell$-th root of unity and $t$ is a variable. For all $n\geq 5$, there is a smooth projective $n$-fold $X$ over $K$ such that the $i$-th Griffiths group $\on{Griff}^i(X_{\cl{K}})[\ell]$ is infinite for all $3\leq i\leq n-1$ and all prime numbers $\ell>5$.
	\end{cor}

	\medskip
	
	A key ingredient for the proof of \Cref{schoen-thm} is a result of Bloch--Esnault \cite[Theorem 1.2]{bloch1996coniveau}, which implies that the restriction map \[H^i(X,\Z/p)\to H^i(\cl{\Q}(X),\Z/p)\] is non-zero, when $X$ is a smooth projective $\cl{\Q}$-variety with good \emph{ordinary} reduction at a prime $p$, the reduction of $X$ modulo $p$ has a non-zero global differential $i$-form, and either $p>i+2$ or the crystalline cohomology of $X$ is $p$-torsion free. Schoen applied this theorem to the desingularization of the self-fiber product of a certain elliptic surface $W$ related to $E^3$, whose construction we recall in \Cref{construction}.
	
	The proof of \Cref{thm-fermat} is made possible by a recent theorem of Farb--Kisin--Wolfson \cite[Theorem 4]{farb2021prismatic}, whose proof uses recent advances in prismatic cohomology, and which implies the same conclusion as the theorem of Bloch--Esnault, but for all primes $p > i+2$ of good unramified reduction, not necessarily ordinary; see \Cref{fkw}. 
	
	\subsection*{Notation}
	If $A$ is an abelian group and $n\geq 1$ is an integer, we denote by $A[n]$ and $A/n$ the kernel and cokernel of the homomorphism $A\to A$ given by multiplication by $n$, respectively. We also write $A/\on{tors}$ for the quotient of $A$ by its torsion subgroup.
	
	If $k$ is a field, we write $\cl{k}$ for a separable closure of $k$ and $G_k\coloneqq \on{Gal}(\cl{k}/k)$ for the absolute Galois group of $k$. If $M$ is a continuous (left) $G_k$-module and $i\geq 0$ is an integer, we write $H^i(k,M)=H^i(G_k,M)$ for the $i$-th Galois cohomology group of $M$.
	
	A $k$-variety $X$ is a separated geometrically integral $k$-scheme of finite type. We write $k(X)$ for the function field of $X$. 
	
	Let $X$ be a smooth variety over $k$. For all integers $i\geq 0$, we let $Z^i(X)$ be the free abelian group of codimension $i$ cycles on $X$, and we write $CH^i(X)$ for the Chow group of codimension $i$ cycles on $X$, that is, the quotient of $Z^i(X)$ modulo rational equivalence. 
	
	We will not need to consider \'etale cohomology of sheaves on $X$, but only on $X_{\cl{k}}$. If $\mc{F}$ is an \'etale sheaf on $X$, we write $H^i(X_{\cl{k}},\mc{F})$ for the cohomology of the pullback of $\mc{F}$ to the small \'etale site of $X_{\cl{k}}$. If $Z\subset X$ is a closed subscheme of $X$, we write $H^i_Z(X_{\cl{k}},\mc{F})$ for the cohomology of the pullback of $\mc{F}$ supported on $Z_{\cl{k}}$. We also set
	\[H^i_Z(X_{\cl{k}},\mc{F})_0\coloneqq \on{Ker}[H^i_Z(X_{\cl{k}},\mc{F})\to H^i(X_{\cl{k}},\mc{F})].\]
	If $Z$ consists of a single closed point $x$, we will write $H^i_x(X_{\cl{k}},\mc{F})$ and $H^i_x(X_{\cl{k}},\mc{F})_0$ for $H^i_Z(X_{\cl{k}},\mc{F})$ and $H^i_Z(X_{\cl{k}},\mc{F})_0$, respectively. If $\ell$ is a prime number invertible in $k$ and $j$ is an integer, these definitions apply in particular to the \'etale cohomology with twisted coefficients $\Z/\ell(j)$.
	
	Similarly, we write  $H^i(X_{\cl{k}},\Z_{\ell}(j))$ (resp. $H^i_Z(X_{\cl{k}},\Z_{\ell}(j))$) for the $\ell$-adic cohomology of $X_{\cl{k}}$ (resp. the $\ell$-adic cohomology of $X_{\cl{k}}$ supported on $Z\subset X$) and set 
	\[H^i_Z(X_{\cl{k}},\Z_{\ell}(j))_0\coloneqq \on{Ker}[H^i_Z(X_{\cl{k}},\Z_\ell(j))\to H^i(X_{\cl{k}},\Z_\ell(j))].\]
	We let \[N^1H^i(X_{\cl{k}},\Z/\ell)\coloneqq \on{Ker}[H^i(X_{\cl{k}},\Z/\ell)\to H^i(\cl{k}(X),\Z/\ell)]\]
	be the first step of the coniveau filtration on $H^i(X_{\cl{k}},\Z/\ell)$. We write $H^0(X,\Omega^i)$ for the $k$-vector space of global differential $i$-forms on $X$.
	
	Suppose that $X$ is also projective. We write $\on{NS}(X)$ for the N\'eron-Severi group of $X$, that is, the quotient of $Z^1(X)$ by the subgroup of algebraically trivial cycles, in the sense of Fulton \cite[Definition 10.3]{fulton1998intersection}. (Note that we do not assume that $k$ is algebraically closed in this definition.) For all $i\geq 0$, we let $Z^i_{\on{hom},\ell}(X)$ be the kernel of the cycle map $\on{cl}^i\colon Z^i(X)\to H^{2i}(X_{\cl{k}},\Z_{\ell}(i))$, and let $CH^i_{\on{hom},\ell}(X)$ be the Chow group of homologically trivial cycles, that is, the quotient of $Z^i_{\on{hom},\ell}(X)$ modulo rational equivalence.

	\section{The theorem of Farb--Kisin--Wolfson}\label{fkw-section}
	Let $C$ be an algebraically closed field, $X$ be a smooth proper $C$-scheme, and $p$ be a prime number. Following \cite[2.2.12]{farb2021prismatic}, we say that $X$ has \emph{good reduction} at $p$ if there exist a $p$-adic valuation on $C$, with ring of integers $\mc{O}_C$, and a smooth proper $\mc{O}_C$-scheme $\mc{X}$ whose generic fiber is isomorphic to $X$. (Note that $C$ is not necessarily complete with the respect to the valuation.) We say that $X$ has \emph{unramified good reduction} at $p$ if $X$ has good reduction at $p$ and $\mc{X}$ can be chosen so that it is defined over a discrete valuation ring $R\subset \mc{O}_C$ whose maximal ideal is generated by $p$ (that is, $p$ is unramified in $R$) and whose residue field is perfect. 
	
	The key ingredient for our proof of \Cref{thm-fermat} is the following theorem of Farb--Kisin--Wolfson \cite[Theorem 4]{farb2021prismatic}. 
	
	\begin{thm}[Farb--Kisin--Wolfson]\label{fkw}
		Let $C$ be an algebraically closed field of characteristic zero, $i\geq 0$ be an integer, $p>i+2$ be a prime, and $X$ be a proper smooth connected scheme over $C$ with unramified good reduction at $p$. Then \[\dim_{\F_p}\on{Im}[H^i(X,\Z/p)\to H^i(C(X),\Z/p)]\geq \dim_CH^0(X,\Omega^i).\]
		In particular, if $H^0(X,\Omega^i)\neq 0$ then $N^1H^i(X,\Z/p)\neq H^i(X,\Z/p)$.
	\end{thm}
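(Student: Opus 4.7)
The plan is to follow the prismatic cohomology approach of Bhatt--Scholze, which allows one to bypass the ordinarity hypothesis of Bloch--Esnault. First, I would use the unramified good reduction hypothesis to choose a smooth proper integral model $\mc{X} \to \Spec R$, where $R \subset \mc{O}_C$ is a complete discrete valuation ring with perfect residue field $k$ and uniformizer $p$, and whose generic fiber becomes $X$ after base change to $C$. Denote by $\mc{X}_k$ the special fiber. By smooth and proper base change for \'etale cohomology, $H^i(X,\Z/p) \cong H^i(\mc{X}_k,\Z/p)$, and by flat base change for coherent cohomology of a smooth proper morphism, $\dim_k H^0(\mc{X}_k, \Omega^i) = \dim_C H^0(X, \Omega^i)$.

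Next I would bring in the prismatic cohomology $R\Gamma_\Delta(\mc{X}/R)$ of Bhatt--Scholze, which carries a Frobenius-semilinear endomorphism together with a Nygaard filtration whose associated graded pieces, after base change to $k$, compute the Hodge cohomology $H^j(\mc{X}_k, \Omega^{i-j})$. The mod $p$ \'etale cohomology of the generic fiber is recovered from the Frobenius fixed points of the mod $p$ reduction of prismatic cohomology, via the \'etale comparison theorem. The hypothesis $p > i+2$ is used, via the torsion bounds of Bhatt--Morrow--Scholze, to ensure that in cohomological degree $i$ the Nygaard filtration behaves well: the top graded piece, which computes $H^0(\mc{X}_k, \Omega^i)$, embeds into $H^i_\Delta(\mc{X}/R)/p$ in a Frobenius-equivariant way, and hence injects into $H^i(X,\Z/p)$.

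This already produces a subspace of $H^i(X,\Z/p)$ of dimension at least $\dim_C H^0(X, \Omega^i)$. To upgrade this to the statement about the image in $H^i(C(X), \Z/p)$, I would argue that any class lying in $N^1 H^i(X, \Z/p)$ is supported on a divisor, and therefore, via purity together with the Hodge--Tate specialization of prismatic cohomology, must live in a deeper step of the Hodge filtration on $\mc{X}_k$. In particular, such a class cannot coincide with one arising from a non-zero global $i$-form on $\mc{X}_k$. Combining this with the injection from the previous paragraph yields the desired inequality.

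The main obstacle will be making precise this last step: identifying the coniveau filtration, which is defined geometrically on the generic fiber, with a suitable subfiltration of the Hodge/Nygaard filtration on prismatic cohomology, in a way that survives modulo $p$. Prismatic cohomology, through its integral Frobenius and Nygaard structures, is precisely what makes this comparison available without the ordinarity assumption required in Bloch--Esnault, while the numerical bound $p > i+2$ enters to rule out the torsion pathologies that would otherwise obstruct the top-Hodge-degree-to-\'etale injection.
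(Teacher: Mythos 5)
The paper does not prove this theorem: it is quoted verbatim as an external result and justified only by a citation to [FKW21, Theorem 4]. There is thus no in-paper proof against which to check your sketch, so the remarks below compare it against the actual FKW argument. Your outline has the right overall shape (integral model over an unramified DVR with perfect residue field, prismatic cohomology with its Nygaard filtration, $p>i+2$ as a torsion-control hypothesis, comparison of coniveau with Hodge), but two of the preliminary steps are stated incorrectly. The assertion that $H^i(X,\Z/p)\cong H^i(\mc{X}_k,\Z/p)$ ``by smooth and proper base change'' is false: the smooth base change theorem requires the torsion of the coefficients to be invertible on the base, so it does not identify the $\Z/p$-\'etale cohomology of the characteristic-zero generic fibre with that of the characteristic-$p$ special fibre; the correct bridge is the \'etale comparison theorem for prismatic cohomology, which you invoke afterwards. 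Likewise, $\dim_k H^0(\mc{X}_k,\Omega^i)=\dim_C H^0(X,\Omega^i)$ is not a ``flat base change'' statement, since the special fibre is not a flat base change of the generic one and Hodge numbers of a smooth proper family can jump; in general only the semicontinuity inequality $\dim_k H^0(\mc{X}_k,\Omega^i)\geq\dim_C H^0(X,\Omega^i)$ holds, though that inequality does point in the direction you need.

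The substantive issue is the final step, which you rightly flag as the main obstacle but only gesture at: showing that a class in $N^1H^i(X,\Z/p)$, being supported on a divisor, must land under the prismatic comparison in a strictly positive step of the Hodge/Nygaard filtration on the special fibre. This is precisely where Bloch--Esnault used ordinarity (so that the slope and Hodge filtrations on crystalline cohomology mod $p$ coincide), and where the Nygaard filtration and the hypothesis $p>i+2$ are supposed to replace that hypothesis. Carrying this out requires a Gysin/purity comparison performed integrally and compatibly with the Frobenius structure on prismatic cohomology, not merely an appeal to the Hodge--Tate specialization, and it is exactly the content of the FKW theorem. As it stands your text is a plan of attack rather than a proof, and since the one step it leaves open is the one that distinguishes FKW from the earlier Bloch--Esnault result, it cannot be counted as a proof sketch of the statement.
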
 
	We will apply \Cref{fkw} to $C=\cl{\Q}$, $i=3$, $p>5$ and $X=W_{\cl{\Q}}$, where $W$ is the smooth projective threefold over $\Q$ defined in \Cref{construction}.

	\section{The threefold \texorpdfstring{$W$}{W}}\label{construction}
	The proof of \Cref{thm-fermat} is based on the study of the self-fiber product $W$ of a certain elliptic surface. We recall the construction of $W$ given by Schoen \cite[\S 10]{schoen2002complex}. 
	
	Let $k$ be a field of characteristic different from $2$ and $3$ and containing a primitive third root of unity $\zeta$. Let $u_0,u_1$ be homogeneous coordinates in $\P^1_k$ and $x_0,x_1,x_2$ be homogeneous coordinates on $\P^2_k$. We define a closed subscheme $Y \subset \mathbb{P}^1_k \times_k \mathbb{P}^2_k$ by the bihomogeneous equation
	\[u_0(x_0^3 + x_1^3 + x_2^3) - u_1x_0x_1x_2 = 0.\] 
	Let $\pi\colon Y\to \P^1_k$ be the projection onto the first factor. The morphism $\pi$ admits a section $s\colon \P^1_k\to Y$ whose image is $\mathbb{P}^1_k \times_k \set{(0 : -1 : 1)}$. A general fiber of $\pi$ is a smooth cubic curve in $\P^2_k$, and hence has genus one. There are four singular fibers of $\pi$, namely those of 
	\[(1:3),\quad (1:3\zeta),\quad (1:3\zeta^2),\quad (1:0).\] Each singular fiber has Kodaira type $I_3$, that is, it consists of a triangle of smooth rational curves with nodal intersections. Thus $\pi$ is a semistable elliptic surface. In fact, $\pi$ is the relatively minimal, regular, projective model of the universal elliptic curve with a symplectic level $3$ structure; see \cite[\S 1]{schoen1993complex}.

	Let 
	\[U\coloneqq \P^1_k-\set{(1:3),(1:3\zeta),(1:3\zeta^2),(1:0)}\] be the smooth locus of $\pi$, let $j:U\hookrightarrow \P^1_k$ be the inclusion morphism, and let $\pi_U\colon Y|_U\to U$ and be the restriction of $\pi$. Thus $\pi_U$ is an abelian scheme with identity section given by the restriction of $s$ to $U$.
	
	Let $W \rightarrow Y \times_{\P^1_k} Y$ be the blow-up of the self-fiber product of $\pi$ along the singular locus of $Y\times_{\P^1_k}Y$. Since the singularities of $Y\times_{\P^1_k}Y$ are isolated ordinary double points, $W$ is smooth over $k$. We will view $W$ as a $\P^1_k$-scheme via the composition \[q\colon W\to Y\times_{\P^1_k}Y\to \P^1_k.\]

	\section{The \texorpdfstring{$\ell$}{l}-adic Abel-Jacobi map and CM cycles}
	
	We maintain the notation introduced in \Cref{construction}. 
	
	\subsection{The \texorpdfstring{$\ell$}{l}-adic Abel-Jacobi map}\label{abel-jacobi-def}
	Let $\ell$ be a prime number invertible in $k$. The $\ell$-adic intermediate Jacobian for cycles on $W$ of codimension $2$ is defined as the direct limit
	\[J_{\ell}^2(W)\coloneqq \varinjlim_{k'}H^1(k',H^3(W_{\cl{k}},\Z_{\ell}(2))/\on{tors}),\]
	where $k'$ ranges over all finite extensions of $k$ contained in $\cl{k}$. We refer the reader to \cite[\S 3]{schoen2002complex} for the definition of the $\ell$-adic Abel-Jacobi map
	\[\alpha_{\ell,k'}^2\colon CH^2_{\on{hom},\ell}(W_{k'})\to H^1(k',H^3(W_{\cl{k}},\Z_{\ell}(2))/\on{tors}),\]
	where $k\subset k'\subset \cl{k}$ is an intermediate field of finite degree over $k$. The definition of $\alpha_{\ell,k'}^2$ is functorial in $k'$. Passing to the direct limit in $k'$, we obtain a homomorphism
	\[\alpha_{\ell}^2\colon CH^2_{\on{hom},\ell}(W_{\cl{k}})\to J_{\ell}^2(W).\]
	For every intermediate field $k\subset k'\subset \cl{k}$ of finite degree over $k$, the abelian group $H^1(k',H^3(W_{\cl{k}},\Z_{\ell}(r))/\on{tors})$ is naturally a continuous $G_k$-module. Therefore, $J_{\ell}^2(W)$ is also a continuous $G_k$-module. 
	
	By construction, for every intermediate field $k\subset k'\subset \cl{k}$ of finite degree over $k$, we have a commutative square of $G_k$-modules
	\begin{equation}\label{abel-jacobi-diagram}
		\begin{tikzcd}
			CH^2_{\on{hom},\ell}(W_{k'}) \arrow[r,"\alpha_{\ell,k'}^2"] \arrow[d] & H^1(k',H^3(W_{\cl{k}},\Z_{\ell}(2))/\on{tors}) \arrow[d,hook] \\
			CH^2_{\on{hom},\ell}(W_{\cl{k}}) \arrow[r,"\alpha_{\ell}^2"] & J_{\ell}^2(W),
		\end{tikzcd}
	\end{equation}
	where by \cite[Lemma 3.1]{schoen2002complex} (which adapts an argument of \cite[4.1.1]{bloch1996coniveau}) the left vertical map is injective and has image equal to $J_{\ell}^2(W)^{G_{k'}}$.

	\subsection{CM points and CM cycles}\label{cm-point-cycles}
	
	Let $k'/k$ be a field extension and $x$ be a $k'$-rational point of $U_{k'}$. Following \cite[Definition (2.1)]{schoen2002complex}, we say that  $x$ is a \emph{CM point} if there exists an order $\mc{O}$ in an imaginary quadratic field  such that 
	\[\on{End}(\pi^{-1}(x))\cong \on{End}(\pi^{-1}(x)_{\cl{k'}})\cong \mc{O}.\]
	Note that $q^{-1}(x)=\pi^{-1}(x)\times_{k'} \pi^{-1}(x)$ for all field extensions $k'/k$ and all $x\in U_{k'}$. If $x\in U_{k'}$ is a CM point, then the N\'eron-Severi group $\on{NS}(q^{-1}(x))$ of $q^{-1}(x)$ has rank $4$; see \cite[Chapter IV, Exercise 4.10]{hartshorne} or \cite[(2.3)]{schoen2002complex}. Let 
	\[\on{NS}_0(q^{-1}(x))\coloneqq \on{Span}_{\Z}\set{\pi^{-1}(x)\times_{k'} s(x),s(x)\times_{k'} \pi^{-1}(x),\Sigma_{\pi^{-1}(x)}}\subset \on{NS}(q^{-1}(x)),\]
	where $\Sigma_{\pi^{-1}(x)}$ is the class of the diagonal of $q^{-1}(x)$. 
	
	Following \cite[Definition (2.1)]{schoen2002complex}, a cycle $z\in Z^2(W_{k'})$ is called a \emph{CM cycle} if there exists a CM point $x\in U_{k'}$ such that the support of $z$ is contained in $q^{-1}(x)$ and the class of $z$ in $\on{NS}(q^{-1}(x))$ generates the orthogonal complement of $\on{NS}_0(q^{-1}(x))$.
	
	\begin{lemma}\label{cm-homologically-trivial}
		Let $k\subset k'\subset \cl{k}$ be an intermediate field of finite degree over $k$ and let $z\in Z^2(W_{k'})$ be a CM cycle. Then $z$ lies in $Z^2_{\on{hom},\ell}(W_{k'})$ for all primes $\ell\geq 5$ invertible in $k$.
	\end{lemma}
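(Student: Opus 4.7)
The plan is to identify the $\ell$-adic cycle class of $z$ as a pushforward from the surface $S_x := q^{-1}(x) = \pi^{-1}(x) \times_{k'} \pi^{-1}(x)$ on which $z$ is supported, and to show that this pushforward vanishes using Poincar\'e duality on the threefold $W_{\cl{k}}$. Writing $i \colon S_x \hookrightarrow W_{k'}$ for the inclusion, functoriality of cycle classes gives $\on{cl}_W^2(z) = i_* \on{cl}_{S_x}^1(z)$, where $\on{cl}_{S_x}^1(z) \in H^2(S_{x,\cl{k}}, \Z_\ell(1))$ factors through the map $\on{NS}(S_x) \otimes \Z_\ell \to H^2(S_{x,\cl{k}}, \Z_\ell(1))$. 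By Poincar\'e duality and the projection formula on $W_{\cl{k}}$, it suffices (modulo torsion) to verify that $\on{cl}_{S_x}^1(z)$ is orthogonal, under the cup product on $S_x$, to the image of the restriction map $i^* \colon H^2(W_{\cl{k}}, \Z_\ell(1)) \to H^2(S_{x,\cl{k}}, \Z_\ell(1))$.

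To bound this image I would apply the Leray spectral sequence to the smooth family $q|_U \colon q^{-1}(U) \to U$, where $U \subset \P^1_k$ is the smooth locus of $\pi$ and $q|_U$ is a smooth family of abelian surfaces. Edge-map considerations place $i^* H^2(W_{\cl{k}}, \Q_\ell(1))$ inside the $\pi_1^{\text{\'et}}(U_{\cl{k}}, x)$-invariants of $H^2(S_{x,\cl{k}}, \Q_\ell(1))$. Since $\pi$ is a regular model of the universal elliptic curve with full $\Gamma(3)$-level structure, its monodromy on $H^1(\pi^{-1}(x), \Q_\ell)$ is Zariski-dense in $\on{SL}_2$. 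The K\"unneth formula splits $H^2(S_{x,\cl{k}}, \Q_\ell(1))$ into the two edge pieces $H^2(\pi^{-1}(x), \Q_\ell(1)) \cong \Q_\ell$ and the middle piece $(H^1 \otimes H^1)(1) \cong \on{End}(H^1(\pi^{-1}(x), \Q_\ell))$, on which monodromy acts by conjugation with only the scalar invariants. Thus the monodromy invariants are three-dimensional and span precisely $\on{NS}_0(S_x) \otimes \Q_\ell$, modulo a transcendental complement.

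Combining the above, the hypothesis that $\on{cl}_{S_x}^1(z) \in \on{NS}(S_x) \otimes \Z_\ell$ is orthogonal to $\on{NS}_0(S_x)$, together with the fact that algebraic classes pair trivially with transcendental complements after inverting $\ell$, shows that $\on{cl}_{S_x}^1(z) \cup i^* \alpha = 0$ for every $\alpha \in H^2(W_{\cl{k}}, \Q_\ell(1))$, and hence $i_* \on{cl}_{S_x}^1(z) = 0$ rationally. The hypothesis $\ell \geq 5$ is then used to promote rational to integral vanishing, by controlling $\ell$-torsion in $H^*(W_{\cl{k}}, \Z_\ell)$ and ensuring the mod-$\ell$ monodromy remains rich enough to pin down exactly $\on{NS}_0(S_x) \otimes \Z_\ell$ (the primes $2$ and $3$ interact badly with the $\Gamma(3)$-level structure). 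I expect the principal obstacle to be the Leray/monodromy step, namely the need to compare cohomology of $W$ with that of $q^{-1}(U)$, to account for the blowup used to construct $W$ from the singular self-fiber product $Y \times_{\P^1_k} Y$, and to pin down the image of monodromy in $\on{SL}_2(\Z_\ell)$ precisely enough to implement the orthogonality argument integrally.
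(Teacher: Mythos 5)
The paper does not give its own argument here; it cites Schoen's Lemma (2.6). Your reconstruction is, in outline, the same argument as Schoen's, and it is essentially correct: identify $\on{cl}^2_W(z)$ with the Gysin pushforward of $\on{cl}^1_{S_x}(z)$ from the abelian surface $S_x=q^{-1}(x)$, use the projection formula plus Poincar\'e duality to reduce to showing $\on{cl}^1_{S_x}(z)\perp i^*H^2(W_{\cl{k}},\Z_\ell(1))$, and then bound $i^*H^2(W_{\cl{k}},\Q_\ell(1))$ by the Leray spectral sequence of $q|_U$: it lies in the monodromy invariants of $H^2(S_{x,\cl{k}},\Q_\ell(1))$, which by the K\"unneth decomposition and the big monodromy of the $\Gamma(3)$-level family is exactly $\on{NS}_0(S_x)\otimes\Q_\ell$, to which $[z]$ is orthogonal by definition of a CM cycle.

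Two of your concluding caveats are superfluous, though, and one is slightly confused. First, there is no need to ``account for the blowup'' carefully when comparing $H^2(W_{\cl{k}})$ with $H^2(q^{-1}(U)_{\cl{k}})$: since $S_x\subset q^{-1}(U)\subset W$, the restriction $i^*$ factors through $H^2(q^{-1}(U)_{\cl{k}})$, and you only need the resulting \emph{containment} of $i^*H^2(W)$ inside the edge piece $E_2^{0,2}=H^0(U_{\cl{k}},R^2(q|_U)_*\Q_\ell(1))$, not an identification. Second, the phrase ``modulo a transcendental complement'' is not meaningful here: the monodromy invariants are exactly the $3$-dimensional space $\on{NS}_0(S_x)\otimes\Q_\ell$, so $[z]\perp\on{NS}_0(S_x)$ already gives $[z]\perp i^*H^2(W_{\cl{k}},\Q_\ell(1))$; no statement about algebraic vs.\ transcendental classes is needed. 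Third, the ``mod-$\ell$ monodromy'' is not where $\ell\geq 5$ enters. The pairing argument over $\Q_\ell$ already forces $\langle i_*[z],\alpha\rangle=0$ in $\Z_\ell$ for all $\alpha\in H^2(W_{\cl{k}},\Z_\ell(1))$, because $H^2(S_{x,\cl{k}},\Z_\ell(1))$ is torsion-free and the $\Z_\ell$-valued pairing is detected after $\otimes\Q_\ell$. So $i_*[z]$ is torsion in $H^4(W_{\cl{k}},\Z_\ell(2))$, and the sole use of $\ell\geq 5$ is to quote \cite[Proposition (4.15)]{schoen2002complex}, which gives that $H^4(W_{\cl{k}},\Z_\ell)$ is torsion-free, forcing $i_*[z]=0$.
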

	
	\begin{proof}
		See \cite[Lemma (2.6)]{schoen2002complex}.
	\end{proof}

	\subsection{The idempotent \texorpdfstring{$P$}{P}}\label{idempotent}
	In \Cref{aj-cm-cycles}, we will need to use a commutative diagram proved in \cite[Proposition (5.3)]{schoen2002complex}. In order to state this precisely, we introduce the following idempotent $P$.
	
	The inversion map in the abelian scheme $\pi_U$ extends to give an involution of $Y$ over $\P^1_k$, which we denote by $-1 \in \operatorname{Aut}(Y/\P^1_k)$. The subgroup of $\operatorname{Aut}((Y \times_{\P^1_k} Y)/\P^1_k)$ generated by $(-1, \operatorname{Id}_Y)$, $(\operatorname{Id}_Y, -1)$, and the permutation of the two factors in the fiber product $\tau$ is isomorphic to the dihedral group with $8$ elements. This group stabilizes the singular locus of $Y\times_{\P^1_k}Y$ and hence lifts to an isomorphic group $D \subset \operatorname{Aut}(W/\P^1_k)$.
	
	Let $\chi\colon D \to \set{\pm1}$ be the unique group homomorphism such that $\chi(\tau)=\chi((-1, \mathrm{Id}_Y))=-1$. We write $\Z[1/2][D]$ for the group algebra of $D$ over $\Z[1/2]$, and we define
	\[P \coloneqq \frac{1}{8}\sum_{g\in D} \chi(g)g \in \mathbb{Z}[1/2][D].\] 
	Note that $P$ is idempotent and $gP = Pg = \chi(g)P$ for all $g \in D$.
	
	\subsection{The Abel-Jacobi map on CM cycles}\label{aj-cm-cycles}
	
	Let $\ell\geq 5$ be a prime number invertible in $k$, let $k'/k$ be a field extension and $x$ be a $k'$-rational point of $U_{k'}$. 
	
	The idempotent $P$ introduced in \Cref{idempotent} defines a $G_{k'}$-equivariant projection $P_*\colon H^3(W_{\cl{k}},\Z_{\ell}(2))\to P_*H^3(W_{\cl{k}},\Z_{\ell}(2))$, hence a homomorphism
	\[P_*\colon H^1(k',H^3(W_{\cl{k}},\Z_{\ell}(2)))\to H^1(k',P_*H^3(W_{\cl{k}},\Z_{\ell}(2))).\]
	Consider the following \'etale sheaf on $\P^1_k$:
	\[\mc{M}\coloneqq j_*\on{Sym}^2(R^1(\pi_U)_*\Z/\ell(2)).\]
	Let $k\subset k'\subset \cl{k}$ be an intermediate field of finite degree over $k$ and let $x\in U_{k'}$ be a $k'$-rational point. Since $\ell\geq 5$, by \cite[Proposition (5.3)]{schoen2002complex} we have the following commutative diagram of continuous $G_{k'}$-modules with exact rows and surjective vertical maps:
	\begin{equation}\label{5.3-diag}
		\adjustbox{max width=\textwidth}{
			\begin{tikzcd}
				0 \arrow[r] & P_*H^3(W_{\overline{k}},\Z_{\ell}(2)) \arrow[r] \arrow[d,->>,"\eta"] & P_*H^3((W_{k'}-q^{-1}(x))_{\overline{k}},\Z_{\ell}(2)) \arrow[r] \arrow[d,->>] & P_*H^4_{q^{-1}(x)}(W_{\overline{k}},\Z_{\ell}(2))_0 \arrow[r]\arrow[d,->>] & 0 \\
				0 \arrow[r] & H^1(\P^1_{\overline{k}}, \mc{M}) \arrow[r] & H^1((\P^1_{k'}-\set{x})_{\overline{k}}, \mc{M}) \arrow[r] & H^2_x(\P^1_{\overline{k}}, \mc{M}) \arrow[r] & 0.
			\end{tikzcd}
		}
	\end{equation}
	The top (resp. bottom) row of (\ref{5.3-diag}) comes from the Gysin sequence \cite[Remark 5.4(b)]{milne1980etale} for the base change to $\cl{k}$ of the inclusion $\set{x}\hookrightarrow\P^1_{k'}$ (resp. $q^{-1}(x)\hookrightarrow W_{k'}$). Note that, since $\ell\geq 5$, we have $H^2(\P^1_{\cl{k}},\mc{M})=0$ by \cite[Corollary (4.5)(vi)]{schoen2002complex}, and hence $H^2_x(\P^1_{\overline{k}}, \mc{M})=H^2_x(\P^1_{\overline{k}}, \mc{M})_0$.
	
	The homomorphism $\eta$ induces a map
	\[
	\eta_*\colon H^1(k',P_*H^3(W_{\cl{k}},\Z_{\ell}(2)))\to H^1(k',H^1(\P^1_{\cl{k}},\mc{M})).
	\]
	We define 
	\begin{equation}\label{Delta-define}
		\Delta_{k'}\coloneqq \eta_*\circ P_*\colon H^1(k',H^3(W_{\cl{k}},\Z_{\ell}(2)))\to H^1(k',H^1(\P^1_{\cl{k}},\mc{M})).
	\end{equation}
	The map $\Delta_{k'}$ appears without name in the statement of \cite[Lemma (5.4)(ii)]{schoen2002complex}. Since $P_*$ and $\eta_*$ are compatible with restrictions along finite field extensions $k''/k'$, so is $\Delta_{k'}$. We also let
	\[
	\delta_x\colon H_x^2(\P^1_{\cl{k}},\mc{M})^{G_k}\to H^1(k', H^1(\P^1_{\cl{k}},\mc{M})).
	\]
	be the connecting homomorphism induced by the bottom row of (\ref{5.3-diag}).
	
	Assume now that $x \in U_{k'}$ is a CM point and that $z$ is a CM cycle supported on $q^{-1}(x)$. We write $[z]$ for the cohomology class of $z$ in $H^2(q^{-1}(x)_{\cl{k}}, \mathbb{Z}_l(1))^{G_{k'}}$. By \cite[Proposition (2.5)(v) and Lemma (4.3)(iii)]{schoen2002complex}, we have $P_*[z]=[z]$ in $H^2(q^{-1}(x)_{\cl{k}}, \mathbb{Z}_l(1))^{G_{k'}}$. Using the canonical identification
	\[
	H^2(q^{-1}(x)_{\cl{k}}, \mathbb{Z}_{\ell}(1)) \xrightarrow{\sim} H^4_{q^{-1}(x)}(W_{\cl{k}}, \mathbb{Z}_{\ell}(2)),
	\]
	we view $[z]$ as a ${G_{k'}}$-invariant element of $P_*H^4_{q^{-1}(x)}(W_{\cl{k}}), \mathbb{Z}_{\ell}(2))$. By \Cref{cm-homologically-trivial}, the element $[z]$ even lies in $P_*H^4_{q^{-1}(x)}(W_{\cl{k}}), \mathbb{Z}_{\ell}(2))_0$. We may thus consider the element  $\theta_x([z]) \in H^2_x(\P^1_{\cl{k}}, \mc{M})^{G_{k'}}$, where $\theta_x$ has been defined in (\ref{5.3-diag}). 
	
	\begin{lemma}\label{5.4}
		Let $k\subset k'\subset \cl{k}$ be a finite extension of $k$, let $x \in U_{k'}$ be a CM point, and let $z$ be a CM cycle in $q^{-1}(x)$. Let $\ell\geq 5$ be a prime number invertible in $k$.
		
		(i) We have $\theta_x([z])\neq 0$ in $H^2_x(\P^1_{\cl{k}}, \mc{M})^{G_{k'}}$ is not zero.
		
		(ii) We have $\Delta_{k'}(\alpha^2_{\ell,k'}(z))=\delta_x(\theta_x([z]))$.
	\end{lemma}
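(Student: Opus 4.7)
The plan is to deduce both parts from diagram (\ref{5.3-diag}) combined with a local description of $\theta_x$ at the closed point $x\in \P^1_{k'}$.

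For part (ii), I would carry out a diagram chase. By the construction of the $\ell$-adic Abel--Jacobi map recalled in \Cref{abel-jacobi-def}, when $z$ is supported on $q^{-1}(x)$ the class $\alpha^2_{\ell,k'}(z)$ is the image of $[z]\in H^4_{q^{-1}(x)}(W_{\cl{k}},\Z_{\ell}(2))_0^{G_{k'}}$ under the Galois-cohomology connecting homomorphism coming from the torsion-free quotient of the top row of (\ref{5.3-diag}). Since $P_*[z]=[z]$, the commutativity of the right-hand square of (\ref{5.3-diag}) and the naturality of the connecting homomorphism yield
\[
\Delta_{k'}(\alpha^2_{\ell,k'}(z))=\eta_*(P_*\alpha^2_{\ell,k'}(z))=\delta_x(\theta_x([z])).
\]

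For part (i), I would describe the source and target of $\theta_x$ explicitly. The Gysin isomorphism for the smooth codimension-one subvariety $q^{-1}(x)\subset W$ identifies $H^4_{q^{-1}(x)}(W_{\cl{k}},\Z_{\ell}(2))$ with $H^2(q^{-1}(x)_{\cl{k}},\Z_{\ell}(1))$. Since $q^{-1}(x)=\pi^{-1}(x)\times_{k'}\pi^{-1}(x)$, a computation of the $D$-action on the K\"unneth decomposition shows that $P_*$ annihilates the $H^0\otimes H^2$ and $H^2\otimes H^0$ summands and cuts $\on{Sym}^2 H^1(\pi^{-1}(x)_{\cl{k}},\Z_{\ell}(1))$ out of $H^1\otimes H^1$, the sign in the K\"unneth swap being such that the $\chi(\tau)=-1$-isotypic component is the symmetric one. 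On the other hand, $\mc{M}$ is lisse near $x\in U$, so purity identifies $H^2_x(\P^1_{\cl{k}},\mc{M})$ with $\mc{M}_{\cl{x}}(-1)=\on{Sym}^2 H^1(\pi^{-1}(x)_{\cl{k}},\Z/\ell(1))$. Under these identifications $\theta_x$ becomes the reduction map modulo $\ell$.

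The main obstacle will be the remaining claim in part (i): that $[z]$ is not divisible by $\ell$ in the lattice $\on{Sym}^2 H^1(\pi^{-1}(x)_{\cl{k}},\Z_{\ell}(1))$. Since $z$ generates, up to sign, the rank-one orthogonal complement of $\on{NS}_0(q^{-1}(x))$ inside the rank-four N\'eron--Severi group of the CM abelian surface $q^{-1}(x)$, this is a primitivity statement for the transcendental generator. It can be established by computing with the $\on{End}(\pi^{-1}(x))\otimes \Z_\ell$-module structure on $H^1(\pi^{-1}(x)_{\cl{k}},\Z_\ell)$ as in \cite[Lemma~(5.4)(i)]{schoen2002complex}; the hypothesis $\ell\geq 5$ is exactly what is needed there.
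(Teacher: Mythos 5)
The paper's proof of this lemma is just a pointer to \cite[Lemma (5.4)]{schoen2002complex}, and your proposal reconstructs precisely the argument underlying that reference: the K\"unneth/purity identification of $\theta_x$ as reduction of $[z]\in\on{Sym}^2H^1(\pi^{-1}(x)_{\cl{k}},\Z_\ell(1))$ modulo $\ell$, the diagram chase through (\ref{5.3-diag}) for part (ii), and the deferral of the primitivity of $[z]$ to Schoen's endomorphism-algebra computation. This is the same approach, spelled out in more detail than the paper bothers to.
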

	
	\begin{proof}
		See \cite[Lemma (5.4)]{schoen2002complex}. Note that in \cite{schoen2002complex} the maps $\Delta_{k'}$ and $\theta_x$ are not named, and $\theta_x([z])$ is denoted $[\cl{z}]$.
	\end{proof}
	
	\begin{rmk}
		The idempotent $P$ acts as the identity on $H^3(W_{\cl{K}},\Z_{\ell}(2))$; see \cite[Lemma 10.3(iii)]{schoen2002complex}. Therefore, the homomorphism $P_*$ used in (\ref{Delta-define}) is equal to the identity, and so $\Delta_{k'}=\eta_*$. Our reason for introducing $P$ in this paper is to get (\ref{5.3-diag}) by invoking the general result \cite[Proposition (5.3)]{schoen2002complex}.
	\end{rmk}
	
	\section{Good reduction and coniveau}\label{good-reduction}
	
	We maintain the notation of \Cref{construction}. 
	
	\begin{prop}\label{good-reduction-prop}
		Suppose that $k=K$ is a number field, let $\mathfrak{p}$ be a finite place of $K$ of residue characteristic $p\geq 5$, and write $R$ for the valuation ring of $\mathfrak{p}$ and $\F$ for the residue field of $\mathfrak{p}$. Then the elliptic surface $\pi\colon Y\to \P^1_K$ has good reduction at $\mathfrak{p}$, in the sense of \cite[Definition 7.2]{schoen2002complex}. Moreover, the $K$-variety $W$ satisfies the following properties.
		\begin{enumerate}
			\item The $J$-invariant $J\colon \P^1_K\rightarrow \mathbb{P}^1_K$ extends to a finite morphism $\mc{J}\colon \P^1_R\rightarrow \mathbb{P}^1_R$.
			\item The restriction $\mc{J}_{\F}:\mathbb{P}^1_{\F}\rightarrow \mathbb{P}^1_{\F}$ of $\mc{J}$ is finite and separable.
			\item There exists a flat morphism $\Pi:\mc{Y}\rightarrow \mathbb{P}^1_R$ of smooth, proper, irreducible $R$-schemes such that $\Pi_K=\pi$.
			\item The section $s$ of $\pi$ extends to a section $\mc{S}$ of $\Pi$.
			\item The restriction $\Pi_{\F}:\mc{Y}_{\F}\rightarrow \P^1_{\F}$ of $\Pi$ is a relatively minimal, non-isotrivial, semi-stable elliptic surface with the section $\mc{S}_{\F}$.
			\item There are exactly four singular fibers of $\Pi_{\F}$, namely those of $(1:3)$, $(1:3\zeta)$, $(1:3\zeta^2)$, $(1:0)$, and each singular fiber consists of a triangle of smooth rational curves.
			\item The blow-up $\mc{W}$ of the reduced singular locus of $\mc{Y}\times_{\P^1_R} \mc{Y}$ is smooth over $R$.
			\item The $D$-action on $W$ described in \Cref{idempotent} uniquely extends to a $D$-action on $\mc{W}$.
		\end{enumerate}
		In particular, $W_{\cl{\Q}}$ has good unramified reduction at $p$, in the sense of \Cref{fkw-section}.
	\end{prop}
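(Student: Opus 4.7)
The plan is to exhibit explicit $R$-models for $Y$, the section $s$, the $J$-invariant, and the $D$-action, then apply the Jacobian criterion at each fiber to check smoothness. Throughout, the key numerical inputs will be that $p \neq 2, 3$ (from $p \geq 5$) and that $\zeta \in R^\times$ reduces to a primitive cube root of unity in $\F$ (since $p \neq 3$). For items (3)--(6), I would take $\mc{Y} \subset \P^1_R \times_R \P^2_R$ cut out by the same bihomogeneous equation defining $Y$, and $\mc{S}$ to be the constant section with image $\P^1_R \times_R \{(0:-1:1)\}$. Smoothness of $\mc{Y}$ over $\Spec R$ is a Jacobian computation on each fiber: any singular point would force the five partial derivatives $3u_0 x_i^2 - u_1 \prod_{j\neq i} x_j$, $x_0^3+x_1^3+x_2^3$, and $x_0 x_1 x_2$ to vanish simultaneously, and the usual case analysis rules this out provided $3 \in R^\times$. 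Flatness of $\Pi$ follows since every fiber is a plane cubic. Because $\Pi_\F$ is defined by the same equation as $\pi$, its discriminant locus still consists of the four points $(1:3), (1:3\zeta), (1:3\zeta^2), (1:0)$, which remain distinct in $\P^1_\F$ thanks to $p \neq 3$ and the primitivity of $\zeta \bmod \mathfrak{p}$; each singular fiber is an $I_3$ triangle of smooth rational curves by the same computation as in characteristic zero. Relative minimality of $\Pi_\F$ follows from the self-intersection $-2$ of each singular-fiber component, while non-isotriviality follows from item (2).

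For items (1) and (2), I would use the classical formula for the $j$-invariant of a smooth Hesse cubic as a rational function in $t = u_1/u_0$ with denominator $(t^3 - 27)^3$ and numerator a polynomial over $\Z[1/6]$; clearing denominators and homogenizing gives a morphism $\mc{J}\colon\P^1_R \to \P^1_R$ that is finite (each fiber is a finite set), and whose restriction to $\F$ is separable because a direct derivative computation shows that any obstruction to separability would involve $2$ or $3$ in a denominator, both of which are units in $R$. For items (7) and (8), the singular locus of $\mc{Y}\times_{\P^1_R}\mc{Y}$ is finite over $\Spec R$ and consists of the pairs $(q,q)$ with $q$ a node of some singular fiber of $\Pi$. \'Etale-locally at such a pair, $\mc{Y}$ has the form $\Spec R[x,y,t]/(xy - t)$, so the self-fiber product is \'etale-locally the threefold ordinary double point $\Spec R[x_1,y_1,x_2,y_2]/(x_1 y_1 - x_2 y_2)$; blowing up the reduced singular point yields a smooth $R$-scheme whose exceptional divisor is $\P^1\times\P^1$, where $p \neq 2$ is used to ensure that the Hessian of the defining equation is non-degenerate modulo $\mathfrak{p}$. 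The generators $(-1,\on{Id}_Y), (\on{Id}_Y,-1), \tau$ of $D$ are defined by polynomial formulas with coefficients in $\Z[\zeta,1/2] \subset R$ and preserve the singular locus of $\mc{Y}\times_{\P^1_R}\mc{Y}$, so they lift uniquely to automorphisms of $\mc{W}$ by the universal property of the blow-up, giving (8).

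The main obstacle is item (7): one must check that $\mc{Y}\times_{\P^1_R}\mc{Y}$ has only ordinary double point singularities \emph{over $R$}, not merely fiberwise over $K$ and $\F$, and that blowing up the reduced singular subscheme produces a smooth $R$-scheme rather than merely a smooth $K$-scheme and a smooth $\F$-scheme. For the ``in particular'' assertion, I would apply the proposition with $K = \Q(\zeta)$ and $\mathfrak{p}$ any prime of $\Z[\zeta]$ above $p$: since $p \neq 3$, $p$ is unramified in $\Z[\zeta]$, so $R = \Z[\zeta]_\mathfrak{p}$ is a DVR with maximal ideal $(p)$ and finite (hence perfect) residue field. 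Fixing any extension of $\mathfrak{p}$ to a $p$-adic valuation on $\cl{\Q}$ gives $R \hookrightarrow \mc{O}_{\cl{\Q}}$, so the smooth proper $R$-scheme $\mc{W}$ constructed above witnesses unramified good reduction of $W_{\cl{\Q}}$ at $p$ in the sense of \Cref{fkw-section}.
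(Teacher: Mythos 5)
Your approach is genuinely different from the paper's. The paper proves this proposition by \emph{citation}: good reduction of $\pi$ is read off from Miranda--Persson's table of extremal rational elliptic surfaces, properties (1)--(8) are attributed to Schoen's Proposition (7.4), and the only new arguments are (a) the finiteness of $\mc{J}$ — shown by combining quasi-finiteness of $\mc{J}_K$ and $\mc{J}_{\F}$ with properness of $\mc{J}$, since $\P^1_R$ is projective over $R$ — and (b) the ``in particular'' about unramified good reduction, via the fact that $p\geq 5$ is unramified in $\Q(\zeta)$. Your treatment of (b) matches the paper exactly. Your treatment of everything else is a from-scratch verification, which, if completed, would have the advantage of being self-contained.

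That said, there are a few concrete gaps. First, your description of the singular locus of $\mc{Y}\times_{\P^1_R}\mc{Y}$ is incorrect: the fiber product is singular at $(q_1,q_2)$ whenever \emph{both} $q_1$ and $q_2$ are nodes in the same singular fiber of $\Pi$, not only when $q_1=q_2$. (If $\Pi$ is smooth at either coordinate, the projection to the other factor is smooth near that point, so the fiber product is regular there.) This does not break the argument — the \'etale-local model at every such pair is still $R[x_1,y_1,x_2,y_2]/(x_1y_1-x_2y_2)$ — but the stated count of singular points is wrong. Second, your justification for finiteness of $\mc{J}$ (``each fiber is a finite set'') only yields quasi-finiteness; you also need properness of $\mc{J}$, which holds automatically because $\P^1_R$ is projective over $R$, but should be said — this is precisely the two-step argument the paper spells out. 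Third, the obstacle you flag for (7) is real but resolvable along the lines you sketch: once one knows $\mc{Y}$ is \'etale-locally $\Spec R[x,y,t]/(xy-t)$ at every node (which requires verifying that the nodes of $\pi$ persist as nodes of $\Pi_{\F}$, i.e.\ item (6)), the blow-up of the reduced singular locus of $R[x_1,y_1,x_2,y_2]/(x_1y_1-x_2y_2)$ is smooth over $R$ by a direct chart computation, with no need to invoke $p\neq 2$ — the quadric $x_1y_1-x_2y_2$ is already in split form and is non-degenerate in every characteristic, so the Hessian remark is a red herring. Finally, the relative minimality claim in (5) should be argued over $\F$, not just read off from self-intersection numbers; this is where the semistability and the $I_3$ fiber type enter.
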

	
	\begin{proof}
		The fact that $\pi$ has good reduction in the sense of \cite[Definition (7.2)]{schoen2002complex} follows from \cite[Table 5.3 X3333]{miranda1986extremal}. Properties (1)-(8) are proved in  \cite[Proposition (7.4)]{schoen2002complex}, except for the finiteness of $\mc{J}$ in (1), which follows from the rest. Indeed, $\mc{J}$ is quasi-finite because $\mc{J}_K$ and $\mc{J}_{\F}$ are finite, and it is proper because its domain $\P^1_R$ is projective over $R$. 
		
		Property (7) implies that $W_{\cl{\Q}}$ has good reduction at $p$ in the sense of \Cref{fkw-section}. The variety $W_{\cl{\Q}}$ is defined over $\Q(\zeta)$, where $\zeta$ is a primitive third root of unity. Since $p$ is at least $5$, it does not ramify in $\Q(\zeta)$, hence $W_{\cl{\Q}}$ has good unramified reduction at $p$.
	\end{proof}

	\begin{lemma}\label{n1=0}
		Let $k=K=\Q(\zeta)$, where $\zeta$ is a primitive third root of unity.
		Then $N^1H^3(W_{\cl{K}},\Z/\ell(2))=0$ for all primes $\ell\geq 5$ such that $\ell\equiv 2 \pmod 3$.
	\end{lemma}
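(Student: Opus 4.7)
The plan is to deduce \Cref{n1=0} by applying the Farb--Kisin--Wolfson theorem \Cref{fkw} to $W_{\cl{\Q}}$ and then promoting its conclusion via the Galois structure of $H^3(W_{\cl{K}},\Z/\ell(2))$ coming from the CM endomorphisms of $E$, which is especially rigid when $\ell\equiv 2\pmod 3$.

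First, by \Cref{good-reduction-prop} the variety $W_{\cl{\Q}}$ has unramified good reduction at every prime $\ell\geq 5$. For $\ell>5$ the hypothesis $p>i+2=5$ of \Cref{fkw} is met with $i=3$, $p=\ell$, and $X=W_{\cl{\Q}}$, yielding
\[\dim_{\F_\ell}\on{Im}\bigl[H^3(W_{\cl{\Q}},\Z/\ell)\to H^3(\cl{\Q}(W),\Z/\ell)\bigr]\geq \dim_{\cl{\Q}}H^0(W_{\cl{\Q}},\Omega^3).\]
Since the restriction map ignores the Tate twist, this also bounds $\dim_{\F_\ell}(H^3(W_{\cl{K}},\Z/\ell(2))/N^1H^3)\geq h^{3,0}(W)>0$. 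In particular $N^1H^3\neq H^3$; the real work is to promote this to $N^1H^3=0$.

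To do so, I would decompose $H^3(W_{\cl{K}},\Z/\ell(2))$ into $G_K$-stable summands using the surjection $\eta\colon H^3(W_{\cl{K}},\Z/\ell(2))\twoheadrightarrow H^1(\P^1_{\cl{K}},\mc{M})$ from (\ref{5.3-diag}), together with the Leray spectral sequence for $q\colon W\to\P^1_K$ and the Künneth decomposition coming from the self-fiber product structure. The hypothesis $\ell\equiv 2\pmod 3$ is crucial here: the CM action of $\Z[\zeta]$ endows $H^1(E_{\cl{K}},\Z/\ell)$ with the structure of a $1$-dimensional $\F_{\ell^2}$-vector space on which $G_K$ acts $\F_{\ell^2}$-linearly, and since $\F_{\ell^2}\not\subset\F_\ell$ this makes $H^1(E_{\cl{K}},\Z/\ell)$ $G_K$-irreducible over $\F_\ell$. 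The irreducibility propagates through symmetric squares (which define $\mc{M}$) and tensor products, so every $G_K$-irreducible subrepresentation of $H^3(W_{\cl{K}},\Z/\ell(2))$ has a prescribed CM type. I would then inspect candidate classes divisor-by-divisor via the Gysin pushforward from codimension-$1$ closed subschemes of $W$ and verify that no $G_K$-irreducible of the prescribed CM type can be supported on a proper divisor.

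The main obstacle is this matching step. \Cref{fkw} only bounds $H^3/N^1H^3$ below by the modest Hodge number $h^{3,0}(W)$, while $N^1H^3=0$ is substantially stronger. Closing the gap requires cataloging the $G_K$-irreducible constituents of $H^3(W_{\cl{K}},\Z/\ell(2))$ and ruling out those that could lie in the coniveau filtration, using the CM rigidity available precisely when $\ell\equiv 2\pmod 3$. A subsidiary point is that $\ell=5$ is not directly covered by \Cref{fkw} since $p>i+2=5$ fails; this boundary case would require a separate argument, for instance a direct cohomological computation at $\ell=5$.
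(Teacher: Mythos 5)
Your FKW step is right: applying \Cref{fkw} with $i=3$, $p=\ell>5$ to $W_{\cl{\Q}}$ (which has unramified good reduction at $\ell$ by \Cref{good-reduction-prop}, and satisfies $H^0(W,\Omega^3)\neq 0$ by a result of Schoen) gives $N^1H^3(W_{\cl{K}},\Z/\ell(2))\neq H^3(W_{\cl{K}},\Z/\ell(2))$. But the promotion to $N^1=0$ is a genuine gap, and you concede as much. The plan you sketch --- decomposing $H^3$ via Leray and K\"unneth, classifying $G_K$-irreducibles by CM type, and ruling out each candidate via Gysin pushforward from divisors --- is never actually carried out, and it is a substantially heavier machine than what is needed.

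The paper's argument is light because of two specific inputs you are missing, both from Schoen's Lemma (10.3): (i) the $\F_\ell$-vector space $H^3(W_{\cl{K}},\Z/\ell(2))$ has dimension \emph{two}, so $N^1$ is either $0$ or a line; and (iv) there is an automorphism $\sigma\in\on{Aut}(W_K)$ of order $3$ whose induced action $\sigma^*$ on $H^3(W_{\cl{K}},\Z/\ell(2))$ has no eigenvalue $1$. Since $\ell\equiv 2\pmod 3$, the group $(\Z/\ell)^\times$ has no order-$3$ elements, so $\sigma^*$ has no eigenvectors at all over $\F_\ell$. The subspace $N^1$ is $\sigma^*$-stable by functoriality of the coniveau filtration, so a $1$-dimensional $N^1$ would be an eigenline --- contradiction, hence $N^1=0$. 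This dimension count plus the eigenvector-free automorphism is the concrete realization of the ``CM rigidity'' you allude to; it requires no decomposition of $H^3$ into irreducibles and no analysis of divisor classes. One minor point you correctly raise: $\ell=5$ falls outside the range $p>i+2$ of \Cref{fkw}, so the lemma's stated bound $\ell\geq 5$ is slightly too inclusive for the proof given; the downstream applications only require $\ell>5$.
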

	
	\begin{proof}
		Let $\ell\geq 5$ be a prime number. By \Cref{good-reduction-prop}, $W$ has good unramified reduction at $\ell$.  By \cite[Lemma 6.5]{schoen2002complex}, we have $H^0(W,\Omega^3)\neq 0$, hence $H^0(W_{\cl{K}},\Omega^3)\neq 0$. By \Cref{fkw} applied to $p=\ell$, we deduce that
		\begin{equation*}
			N^1H^3(W_{\cl{K}},\Z/\ell(2))\neq H^3(W_{\cl{K}},\Z/\ell(2)).\end{equation*}
		By \cite[Lemma (10.3)(i)]{schoen2002complex}, the $(\Z/\ell)$-vector space $H^3(W_{\cl{K}},\Z/\ell(2))$ has dimension $2$, therefore the subspace $N^1H^3(W_{\cl{K}},\Z/\ell(2))$ is either trivial or $1$-dimensional.
		
		By \cite[Lemma (10.3)(iv)]{schoen2002complex}, there exists an element $\sigma\in \on{Aut}(W_K)$ of order $3$ and such that the $(\Z/\ell)$-linear automorphism \[\sigma^*\colon H^3(W_{\cl{K}},\Z/\ell(2))\to H^3(W_{\cl{K}},\Z/\ell(2))\] induced by $\sigma$ does not have the eigenvalue $1$.  Since $\ell\equiv 2\pmod 3$, there are no elements of order $3$ in $(\Z/\ell)^\times$, hence $\sigma^*$ has no eigenvectors over $\Z/\ell$.
		
		Suppose that $N^1H^3(W_{\cl{K}},\Z/\ell(2))$ is $1$-dimensional, and let $v$ be a generator of $N^1H^3(W_{\cl{K}},\Z/\ell(2))$. By the functoriality of the coniveau filtration, $\sigma^*(v)$ also belongs to $N^1H^3(W_{\cl{K}},\Z/\ell(2))$, and so $\sigma^*(v)=cv$ for some $c\in (\Z/\ell)^{\times}$. This is a contradiction because $\sigma^*$ admits no eigenvectors over $\Z/\ell$. We conclude that $N^1H^3(W_{\cl{K}},\Z/\ell(2))=0$, as desired. 
	\end{proof}
	
	\begin{rmk}
		The proof of \Cref{n1=0} fails for all primes $\ell\equiv 1\pmod 3$. Indeed, by \cite[Lemma (10.3)(vi)]{schoen2002complex} the $G_K$-representation $H^3(W_{\cl{K}},\Z/\ell(2))$ is reducible for all primes $\ell\equiv 1\pmod 3$.
	\end{rmk}
	
	We have introduced the Abel-Jacobi map $\alpha_{\ell}^2$ in \Cref{abel-jacobi-def}.
	
	\begin{prop}\label{6.2}
		Suppose that $k=K=\Q(\zeta)$, where $\zeta$ is a primitive third root of unity. 
		\begin{itemize}
			\item[--] Let $K\subset K_1\subset \cl{\Q}$ be a finite extension of $K$.
			\item[--] Let $\ell\geq 5$ be a prime number such that $\ell\equiv 2\pmod 3$.
			\item[--] Let $z\in Z^2_{\on{hom},\ell}(W_{K_1})$ be such that the image of $\alpha^2_{\ell,K_1}(z)$ in $J_{\ell}^2(W)^{G_{K_1}}/\ell$ is non-zero.
		\end{itemize}
		Then the class of $z$ in $CH^2(W_{\cl{\Q}})/\ell$ is non-zero.
	\end{prop}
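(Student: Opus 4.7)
The plan is to prove the contrapositive: assuming that the class of $z$ in $CH^2(W_{\cl{\Q}})/\ell$ is zero, we show that $\alpha^2_{\ell,K_1}(z)$ lies in $\ell J_{\ell}^2(W)^{G_{K_1}}$, contradicting the hypothesis. The assumptions $\ell \geq 5$ and $\ell \equiv 2 \pmod{3}$ enter only through \Cref{n1=0}, which supplies $N^1 H^3(W_{\cl{\Q}}, \Z/\ell(2)) = 0$. First, writing $z_{\cl{\Q}} = \ell y$ in $CH^2(W_{\cl{\Q}})$ and using $CH^2(W_{\cl{\Q}}) = \varinjlim_{K'} CH^2(W_{K'})$, one obtains a finite Galois extension $K_2/K_1 \subset \cl{\Q}$ and a cycle $u \in CH^2(W_{K_2})$ with $z_{K_2} = \ell u$ in $CH^2(W_{K_2})$. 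Since $z_{K_2}$ is homologically trivial, $\on{cl}^2(u)$ lies in $H^4(W_{\cl{\Q}}, \Z_\ell(2))[\ell]$.

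The central step is to modify $u$ so that it becomes homologically trivial. Via the Bockstein map $H^3(W_{\cl{\Q}}, \Z/\ell(2)) \twoheadrightarrow H^4(W_{\cl{\Q}}, \Z_\ell(2))[\ell]$, the class $\on{cl}^2(u)$ is represented by some $\xi \in H^3(W_{\cl{\Q}}, \Z/\ell(2))$. The Bloch--Ogus spectral sequence for $W_{\cl{\Q}}$ with $\Z/\ell(2)$-coefficients, combined with the Bloch--Kato theorem (Merkurjev--Suslin in codimension $2$), identifies $E_2^{2,2}$ with $CH^2(W_{\cl{\Q}})/\ell$ and provides a differential $d_2\colon H^3_{\on{nr}}(W_{\cl{\Q}}, \Z/\ell(2)) \to CH^2(W_{\cl{\Q}})/\ell$ whose kernel is $H^3(W_{\cl{\Q}}, \Z/\ell(2))/N^1 H^3$; by \Cref{n1=0} this quotient is all of $H^3(W_{\cl{\Q}}, \Z/\ell(2))$. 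Tracing $\xi$ through this diagram (combined with the long exact sequence defining $\alpha^2_{\ell}$ on the Gysin side), one shows that $u$ differs from a homologically trivial cycle by an element of $\ell CH^2(W_{K_2})$; after possibly enlarging $K_2$, this produces $u' \in CH^2_{\on{hom},\ell}(W_{K_2})$ with $z_{K_2} = \ell u'$.

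The linearity of the Abel--Jacobi map then gives $\alpha^2_{\ell, K_2}(z_{K_2}) = \ell \alpha^2_{\ell, K_2}(u')$, and by the functoriality of diagram \eqref{abel-jacobi-diagram} this equals $\on{Res}_{K_1}^{K_2}(\alpha^2_{\ell, K_1}(z))$. Applying corestriction produces $[K_2:K_1] \cdot \alpha^2_{\ell, K_1}(z) \in \ell J_{\ell}^2(W)^{G_{K_1}}$. Arranging the spread-out field $K_2$ so that $[K_2:K_1]$ is coprime to $\ell$ (feasible using the finiteness of $CH^2(W_{\cl{\Q}})[\ell]$ from Merkurjev--Suslin to bound the $\ell$-primary contribution) yields $\alpha^2_{\ell, K_1}(z) \in \ell J_{\ell}^2(W)^{G_{K_1}}$, as required. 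The hardest step will be the cohomological modification of $u$ in the middle paragraph: one must realize the $\ell$-torsion cycle class of $u$ as coming from a controllable correction in the Chow group, and the input of \Cref{n1=0} is precisely what enables this. This is the analogue, in our non-ordinary setting, of \cite[Proposition (6.2)]{schoen2002complex}, with the Farb--Kisin--Wolfson theorem (via the prismatic cohomology of Bhatt--Scholze) replacing the ordinary reduction hypothesis of the Bloch--Esnault theorem used there.
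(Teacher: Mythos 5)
The paper's own proof is a two-line reduction: it quotes \cite[Proposition (4.15)]{schoen2002complex} for the torsion-freeness of $H^3(W_{\cl{\Q}},\Z_\ell)$ and $H^4(W_{\cl{\Q}},\Z_\ell)$, invokes Lemma~\ref{n1=0} for $N^1H^3(W_{\cl{\Q}},\Z/\ell(2))=0$, and then cites \cite[Proposition (6.2)]{schoen2002complex} with $V=W$, $Q=\on{Id}_W$. You instead try to reprove Schoen's statement from scratch, and your sketch has two real gaps.

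First, the ``central step'' of modifying $u$ to be homologically trivial is vacuous and the Bloch--Ogus detour does no work: you already observe that $\on{cl}^2(u)\in H^4(W_{\cl{\Q}},\Z_\ell(2))[\ell]$, and by \cite[Proposition (4.15)]{schoen2002complex} the group $H^4(W_{\cl{\Q}},\Z_\ell(2))$ is torsion-free, so $\on{cl}^2(u)=0$ on the nose. There is nothing to modify. (Also, as written, the claim that the differential $d_2\colon H^3_{\mathrm{nr}}\to CH^2/\ell$ ``has kernel $H^3/N^1H^3$'' is not even type-correct.)

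Second, and more seriously, the spread-out-and-corestrict strategy in the last paragraph does not close. After spreading out you obtain $[K_2:K_1]\cdot\alpha^2_{\ell,K_1}(z)\in\ell J^2_\ell(W)^{G_{K_1}}$, and you need $[K_2:K_1]$ prime to $\ell$; but the field $K_2$ is forced on you by where $y$ and the rational equivalence $z_{K_2}\sim\ell u$ are defined, and the finiteness of $CH^2(W_{\cl{\Q}})[\ell]$ gives no control over that degree. This is exactly the obstacle that the input $N^1H^3(W_{\cl{\Q}},\Z/\ell(2))=0$ is there to remove, and you are not using it where it matters. The actual mechanism (this is the content of \cite[Propositions (6.1) and (6.2)]{schoen2002complex}, going back to \cite[Proposition (4.1)]{bloch1996coniveau}, as flagged in the remark following Proposition~\ref{6.2}): Merkurjev--Suslin together with $N^1H^3=0$ and torsion-freeness of $H^3$ forces the \emph{vanishing} $CH^2(W_{\cl{\Q}})[\ell]=0$, not merely its finiteness. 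Granting this, if $z_{\cl{\Q}}=\ell y$ in $CH^2(W_{\cl{\Q}})$, then for every $\sigma\in G_{K_1}$ one has $\ell(\sigma y - y)=\sigma z_{\cl{\Q}}-z_{\cl{\Q}}=0$, hence $\sigma y=y$; so $y$ is $G_{K_1}$-invariant in $CH^2_{\on{hom},\ell}(W_{\cl{\Q}})$ (homological triviality again from $H^4$ torsion-free). By $G_{K_1}$-equivariance of $\alpha^2_\ell$ one gets $\alpha^2_\ell(y)\in J^2_\ell(W)^{G_{K_1}}$, and via the identification $H^1(K_1,H^3(W_{\cl{K}},\Z_\ell(2))/\on{tors})\cong J^2_\ell(W)^{G_{K_1}}$ from diagram~\eqref{abel-jacobi-diagram}, $\alpha^2_{\ell,K_1}(z)=\ell\alpha^2_\ell(y)\in\ell J^2_\ell(W)^{G_{K_1}}$ --- with no spreading out, no corestriction, and no coprimality assumption.
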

	
	\begin{proof}
		By \cite[Proposition (4.15)]{schoen2002complex}, the $\Z_{\ell}$-modules $H^3(W_{\cl{\Q}},\Z_{\ell})$ and $H^4(W_{\cl{\Q}},\Z_{\ell})$ are torsion-free. By \Cref{n1=0}, the group $N^1H^3(W_{\cl{\Q}},\Z/\ell(2))$ is trivial. The conclusion follows from \cite[Proposition (6.2)]{schoen2002complex}, applied to $V=W$ and $Q=\on{Id}_W$.
	\end{proof}
	
	\begin{rmk}
		Under the assumptions of \Cref{6.2}, it is clear that the class of $z$ in $CH^2_{\on{hom},\ell}(W_{K_1})/\ell$ is non-zero. The difficulty is in showing that the class of $z$ remains non-zero in $CH^2(W_{\cl{\Q}})/\ell$ (or equivalently in $CH^2_{\on{hom},\ell}(W_{\cl{\Q}})/\ell$, since $H^4(W_{\cl{\Q}},\Z_{\ell})$ is torsion-free. The main point of the argument of \cite[Proposition (6.2)]{schoen2002complex} (which goes back to \cite[Proposition (4.1)]{bloch1996coniveau}) is to prove that $CH^2(W_{\cl{\Q}})[\ell]$ is zero. This is a consequence of the Merkurjev--Suslin Theorem; see \cite[Proposition (6.1)]{schoen2002complex}.
	\end{rmk}

	We conclude this section by discussing the compatibility of $\delta_x$  with specialization. We let $\mc{U}\subset \P^1_R$ be the smooth locus of $\Pi$ and write $\underline{j}\colon\mc{U}\to\P^1_R$ for the inclusion morphism. Note that $\underline{j}$ restricts to $j$ on the generic fiber $U$. We define
	\[\tilde{\mc{M}}\coloneqq \underline{j}\vphantom{j}_*\on{Sym}^2(R^1\Pi_*\Z/\ell(2)).\]
	Then $\tilde{\mc{M}}$ is a sheaf on $\P^1_R$ which restricts to $\mc{M}$ on $\P^1_K$. We define $\mc{M}_0$ as the restriction of $\tilde{\mc{M}}$ to $\P^1_{\F}$.

	\begin{prop}\label{7.9}
		Suppose that $k=K$ is a number field.
		\begin{itemize}
			\item[--] Let $\mathfrak{p}$ be a finite place of $K$ of residue characteristic $p\geq 5$, let $R$ be the valuation ring of $\mathfrak{p}$ and $\F$ be the residue field of $\mathfrak{p}$.
			\item[--] Fix a place $\cl{\mathfrak{p}}$ of $\cl{K}$ lying above $\mathfrak{p}$, and let $D\subset G_{K'}$ be the corresponding decomposition group of $\mathfrak{p}$.
			\item[--]  Let $t$ be an $R$-point of $\mc{U}_R$, and write $x\in U(K)$ and $u\in \mc{U}(\F)$ for the restrictions of $t$ to the generic and special fiber, respectively.
		\end{itemize} 
		Then we have a commutative diagram
		\begin{equation*}
			\begin{tikzcd}
				H^2_u(\P^1_{\cl{\F}},\mc{M}_0)^{G_{\F}} \arrow[r,"\delta_u"] \arrow[d,"\wr"] & H^1(\F, H^1(\P^1_{\cl{\F}},\mc{M}_0))\arrow[d,hook]  \\
				H^2_x(\P^1_{\cl{K}},\mc{M})^{D} \arrow[r,"\delta_x"] & H^1(D, H^1(\P^1_{\cl{K}},\mc{M})) \\
				H^2_x(\P^1_{\cl{K}},\mc{M})^{G_{K}} \arrow[r,"\delta_x"] \arrow[u,hook]  & H^1(K, H^1(\P^1_{\cl{K}},\mc{M})),\arrow[u] 
			\end{tikzcd}
		\end{equation*}
		where the horizontal maps have been defined in \Cref{aj-cm-cycles}, the top vertical maps are specialization maps, the bottom-left vertical map is the inclusion, and the bottom-right vertical map is the restriction map in Galois cohomology.
	\end{prop}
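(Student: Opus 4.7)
The diagram consists of two squares, which I treat separately. The bottom square is the standard compatibility of the connecting homomorphism $\delta_x$ with restriction along the subgroup $D \hookrightarrow G_K$: the short exact sequence of $G_K$-modules forming the bottom row of (\ref{5.3-diag}) restricts to a short exact sequence of $D$-modules, and functoriality of the associated long exact sequences in group cohomology gives the commutativity.

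For the top square, the plan is to lift both Gysin sequences to a single Gysin sequence on $\P^1$ over the strict henselization $R^{sh}$ of $R$ at $\cl{\mathfrak{p}}$, and then apply proper base change. Let $\cl{\F}$ be the residue field of $R^{sh}$, let $K^{sh} \subset \cl{K}$ be its fraction field, and let $I \subset D$ denote the corresponding inertia subgroup, so that $D/I \cong G_{\F}$ and $\cl{K}^I = K^{sh}$. Pulling $\tilde{\mc{M}}$ back to $\P^1_{R^{sh}}$, I would consider the Gysin short exact sequence for the closed immersion $t_{R^{sh}}\colon \Spec R^{sh}\hookrightarrow \P^1_{R^{sh}}$. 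By the good reduction statement of \Cref{good-reduction-prop}, the four singular sections of $\Pi$ remain disjoint after specialization to $\cl{\F}$, so $\tilde{\mc{M}}$ is constructible with respect to a stratification of $\P^1_{R^{sh}}$ whose strata are smooth and proper over $\Spec R^{sh}$. Proper base change then yields canonical identifications
\[H^i(\P^1_{R^{sh}}, \tilde{\mc{M}}) \xrightarrow{\sim} H^i(\P^1_{\cl{\F}}, \mc{M}_0), \qquad H^i_{t_{R^{sh}}}(\P^1_{R^{sh}}, \tilde{\mc{M}}) \xrightarrow{\sim} H^i_u(\P^1_{\cl{\F}}, \mc{M}_0),\]
together with $D$-equivariant specialization isomorphisms
\[H^i(\P^1_{\cl{\F}}, \mc{M}_0) \xrightarrow{\sim} H^i(\P^1_{\cl{K}}, \mc{M})^I, \qquad H^i_u(\P^1_{\cl{\F}}, \mc{M}_0) \xrightarrow{\sim} H^i_x(\P^1_{\cl{K}}, \mc{M})^I,\]
where $D$ acts on the source via the quotient $D\twoheadrightarrow D/I=G_{\F}$.

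Taking $D$-invariants realizes the top-left vertical map of the diagram as an isomorphism, while applying inflation-restriction realizes the top-right vertical map as the injection
\[H^1(D/I, H^1(\P^1_{\cl{K}}, \mc{M})^I) \hookrightarrow H^1(D, H^1(\P^1_{\cl{K}}, \mc{M})).\]
The commutativity of the top square is then a formal consequence of the naturality of the connecting homomorphism $\delta$, both in the underlying Gysin sequence (with respect to specialization from $\P^1_{R^{sh}}$ to its fibers) and in Galois cohomology (with respect to inflation from $D/I$ to $D$). The main obstacle is verifying the proper base change step in the version with supports; this reduces to showing that $\tilde{\mc{M}}$ is tamely constructible on $\P^1_{R^{sh}}$, which follows from the good reduction statement of \Cref{good-reduction-prop} guaranteeing that the local monodromies of $R^1\pi_*\Z/\ell(2)$ around the four singular fibers persist unchanged after reduction modulo $\cl{\mathfrak{p}}$.
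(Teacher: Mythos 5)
The paper's proof is a single citation to Schoen's original paper, so you are reconstructing that argument from scratch. Your overall strategy -- split the diagram into two squares, dispose of the bottom one by functoriality of connecting maps under restriction to $D\subset G_K$, and handle the top one by exhibiting both Gysin sequences as the two fibers of a single sequence over the strict henselization $R^{sh}$, then taking group cohomology -- is the right one, and in outline it is what Schoen does.

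The execution is imprecise at the two places that actually require input. For the cohomology with supports, the identifications of $H^2_t(\P^1_{R^{sh}},\tilde{\mc{M}})$ with $H^2_u(\P^1_{\cl{\F}},\mc{M}_0)$ and with $H^2_x(\P^1_{\cl{K}},\mc{M})$ are not instances of ``proper base change with supports'' and do not reduce to tame constructibility of $\tilde{\mc{M}}$. What does the work is that $t$ factors through $\mc{U}_R$, so $\tilde{\mc{M}}$ is lisse on a neighborhood of the image of $t$; absolute purity then identifies each of these groups with a Tate twist of a stalk of $\tilde{\mc{M}}$ along $t$, and the comparison maps become cospecialization maps of a lisse sheaf, hence isomorphisms. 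Tameness at the four bad sections plays no role in this term. Separately, the injectivity (hookarrow) of the top-right vertical map requires that the cospecialization $H^1(\P^1_{\cl{\F}},\mc{M}_0)\to H^1(\P^1_{\cl{K}},\mc{M})^I$ be injective, and this is not a formal consequence of proper base change for a non-lisse sheaf. This is exactly where the good-reduction hypothesis bites: one needs that the stalks of $\tilde{\mc{M}}=\underline{j}_*\on{Sym}^2 R^1\Pi_*\Z/\ell(2)$ at the four sections of bad reduction have the same rank on both fibers (the local monodromy is literally the same on the two sides), together with the vanishing $H^0(\P^1,\mc{M})=H^2(\P^1,\mc{M})=0$ on both fibers, to compare the Gysin/Leray sequences and conclude. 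Your sketch gestures at this (``tame constructibility,'' ``local monodromies persist unchanged'') but does not supply the argument; as written you obtain commutativity of the square but not the asserted injectivity.
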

	
	\begin{proof}
		This is \cite[(7.9)]{schoen2002complex}.
	\end{proof}

	\section{Schoen sequences}
	We maintain the notation introduced in \Cref{construction}. We suppose further that $k=K$ is a number field, we let $\mathfrak{p}$ be a finite place of $K$ of residue characteristic $p\geq 5$ such that $p\neq \ell$, and we write $R$ and $\F$ for the valuation ring and residue field of $\mathfrak{p}$, respectively. We let
	\[R=R_0\subset R_1\subset R_2\subset\cdots,\]
	be an infinite tower of discrete valuation rings contained in $\cl{\Q}$, and for all $i\geq 0$ the morphism $\Spec(R_{i+1})\to \Spec(R_i)$ is \'etale. For all $i\geq 0$, we write $K_i$ for the fraction field of $R_i$ and $\F_i$ for the residue field of $R_i$, so that we have the towers
	\[K=K_0\subset K_1\subset K_2\subset\cdots,\qquad \F=\F_0\subset \F_1\subset \F_2\subset \cdots.\]
	For all $i\geq 1$, we write $\mathfrak{p}_i$ for the maximal ideal of $R_i$. We let $\cl{\mathfrak{p}}$ be a place of $\cl{K}$ lying above the $\mathfrak{p}_i$, and let $D_i\subset G_{K_i}$ be the decomposition group of $\mathfrak{p}$ corresponding to this choice. For all $i\geq 1$, we let $\Phi_i$ be the composition
	\begin{align*}
		\Phi_i \colon & CH^2_{\mathrm{hom},\ell}(W_{K_i}) \xrightarrow{\alpha^2_{\ell,K_i}} H^1(K_i,H^3(W_{\cl{K}},\mathbb{Z}_\ell(2))) \\
		& \xrightarrow{\Delta_{K_i}} H^1(K_i,H^1(\mathbb{P}^1_{\cl{K}},\mathcal{M})) \xrightarrow{r_i} H^1(D_i,H^1(\mathbb{P}^1_{\cl{K}},\mathcal{M})),
	\end{align*}
	where $\alpha_{\ell,K_i}^2$ is the Abel-Jacobi map of \Cref{abel-jacobi-def} for the extension $K_i/K$, the map $\Delta_{K_i}$ coincides with the map (\ref{Delta-define}) for the extension $K_i/K$, and $r_i$ is the restriction homomorphism in Galois cohomology. We have used the fact that, since $\ell\geq 5$, by \cite[Proposition (4.15)]{schoen2002complex} the $\Z_\ell$-module $H^3(W_{\cl{K}},\mathbb{Z}_\ell(2))$ is torsion free. 
	
	For all $1\leq j< i$, since $\mathfrak{p}_j\subset \mathfrak{p}_i\subset \cl{\mathfrak{p}}$, we have $D_i\subset D_j$, and hence a commutative square
	\[
	\begin{tikzcd}
		H^1(K_j,H^1(\P^1_{\cl{K}},\mc{M})) \arrow[r,"r_j"] \arrow[d] & H^1(D_j,H^1(\P^1_{\cl{K}},\mc{M})) \arrow[d] \\
		H^1(K_i,H^1(\P^1_{\cl{K}},\mc{M})) \arrow[r,"r_i"]  & H^1(D_i,H^1(\P^1_{\cl{K}},\mc{M})),
	\end{tikzcd}
	\]
	where vertical arrows are restriction maps in Galois cohomology. As we have mentioned in \Cref{abel-jacobi-def} and \Cref{aj-cm-cycles}, the maps $\alpha^2_{\ell,k'}$ and $\Delta_{k'}$ are compatible with finite field extensions $k''/k'$. We thus obtain, for all $1\leq j<i$, a commutative square
	\begin{equation}\label{phi-comp}
		\begin{tikzcd}[column sep=large]
			CH^2_{\on{hom},\ell}(W_{K_j}) \arrow[r,"\Phi_j"] \arrow[d, hook] & H^1(D_j,H^1(\P^1_{\cl{K}},\mc{M})) \arrow[d] \\
			CH^2_{\on{hom},\ell}(W_{K_i})  \arrow[r,"\Phi_i"] & H^1(D_i,H^1(\P^1_{\cl{K}},\mc{M})),
		\end{tikzcd}
	\end{equation}
	where the left vertical map is given by pullback, and the vertical map is the restriction in Galois cohomology.
	
	Since $p\geq 5$, \Cref{good-reduction-prop} applies. We fix morphisms $\Pi\colon \mc{Y}\to \P^1_R$ and $\mc{J}\colon\P^1_R\to \P^1_R$ extending $\pi$ and $J$, respectively, as in \Cref{good-reduction-prop}. Recall that we denote by $\mc{U}\subset \P^1_R$ the smooth locus of $\Pi$. 
	
	For all $i\geq 1$, let $t_i$ be an $R_i$-point of $\mc{U}_{R_i}$, let $x_i\in U_{K_i}(K_i)$ be the restriction of $t_i$ to the generic fiber, and let $u_i\in \mc{U}_{\F_i}(\F_i)$ be the restriction of $t_i$ to the special fiber. For all $i\geq 1$, suppose that $x_i$ and $u_i$ are CM points, in the sense of \Cref{cm-point-cycles}.
	
	\begin{defin}\label{schoen-sequence}
		We say that $\set{(R_i,x_i,u_i)}_{i\geq 0}$ is a \emph{Schoen sequence for $\pi$ at $\mathfrak{p}$} if the following conditions are satisfied.
		
		\begin{enumerate}[label=(\roman*)]
			\item The morphism $\mc{J}_{\F_i}\colon \P^1_{\F_i} \to \P^1_{\F_i}$ is \'etale at $u_i$.
			\item The $G_{\F_i}$-action on $H^1(\P^1_{\overline{\F}}, \mc{M}_0)$ is trivial.
			\item The $(\Z/\ell)$-vector space $H^2_{u_i}(\P^1_{\overline{\F}}, \mc{M}_0)^{G_{\F_i}}$ is $1$-dimensional.
			\item The map $\delta_{u_i} \colon H^2_{u_i}(\P^1_{\overline{\F}}, \mc{M}_0)^{G_{\F_i}} \to H^1(\F_i, H^1(\P^1_{\overline{\F}}, \mc{M}_0))$ is non-zero.
			\item The restriction map $H^1(\F_j, H^1(\P^1_{\overline{\F}}, \mc{M}_0)) \to H^1(\F_j, H^1(\P^1_{\overline{\F}}, \mc{M}_0))$ is zero for all $1\leq j<i$.
			\item The point $u_i \in \mc{U}_{\F_i}$ cannot be defined over any proper subfield of $\F_i$, that is, the residue field of the morphism $u_i\colon \Spec(\F_i)\to \mc{U}$ is equal to $\F_i$.
		\end{enumerate}
	\end{defin}
	
	If $\set{(R_i,x_i,u_i)}_{i\geq 1}$ is a Schoen sequence and $z_i$ is a CM cycle above $x_i$ for all $i\geq 1$, we will show by using the maps $\Phi_i$ that the classes of the $z_i$ in $CH^2(W_{\cl{K}})/\ell$ are linearly independent. For this, we will need the following proposition, whose proof is based on the final part of the proof of \cite[Theorem 8.1]{schoen2002complex}.
	
	\begin{prop}\label{phi}
		Suppose that $\ell>5$. Let $\mathfrak{p}$ be a finite place  of $K$ of residue characteristic $p\geq 5$ such that $p\neq \ell$, and let $\set{(R_i,x_i,u_i)}_{i\geq 1}$ be a Schoen sequence for $\pi$ at $\mathfrak{p}$. For all $i\geq 1$, let $\mathfrak{p}_i$ be the maximal ideal of $R_i$. Fix a place $\cl{\mathfrak{p}}$ of $\cl{K}$ lying above all the $\mathfrak{p}_i$, and consider the maps $\Phi_i$ with respect to this choice of $\cl{\mathfrak{p}}$. For every $i\geq 1$, let $z_i$ be a CM cycle above $x_i$. 
		
		Then $\Phi_i(z_i)\neq 0$ for all $i\geq 1$ and $\Phi_i(z_j)=0$ for all $1\leq j< i$.
	\end{prop}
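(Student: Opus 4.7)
The plan is to unwind $\Phi_i$ by means of \Cref{5.4}(ii) and the commutative diagram of \Cref{7.9}, reducing both assertions to statements about $\delta_{u_i}$ on the special fiber that follow immediately from conditions (iii)--(v) of \Cref{schoen-sequence}.

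First I would show $\Phi_i(z_i)\neq 0$. By \Cref{5.4}(ii) applied to $K_i$, $x_i$, $z_i$, one has $\Delta_{K_i}(\alpha^2_{\ell,K_i}(z_i))=\delta_{x_i}(\theta_{x_i}([z_i]))$ in $H^1(K_i,H^1(\P^1_{\cl{K}},\mc{M}))$. Restricting to $D_i$ and applying \Cref{7.9} to the $R_i$-point $t_i$, the element $\Phi_i(z_i)$ equals the image, under the injective top--right specialization map $H^1(\F_i,H^1(\P^1_{\cl{\F}},\mc{M}_0))\hookrightarrow H^1(D_i,H^1(\P^1_{\cl{K}},\mc{M}))$, of $\delta_{u_i}(s_i)$, where $s_i\in H^2_{u_i}(\P^1_{\cl{\F}},\mc{M}_0)^{G_{\F_i}}$ denotes the specialization of $\theta_{x_i}([z_i])$. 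By \Cref{5.4}(i) we have $\theta_{x_i}([z_i])\neq 0$, and the top--left vertical specialization isomorphism of \Cref{7.9} transports this to $s_i\neq 0$. Condition (iii) says this ambient space is $1$-dimensional, so $s_i$ is a generator, and condition (iv) then gives $\delta_{u_i}(s_i)\neq 0$; injectivity of the specialization map yields $\Phi_i(z_i)\neq 0$.

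For the vanishing $\Phi_i(z_j)=0$ with $1\leq j<i$, the commutative square (\ref{phi-comp}) rewrites $\Phi_i(z_j)$ as the restriction along $D_i\subset D_j$ of $\Phi_j(z_j)$. By the same \Cref{7.9}-argument applied to $t_j$, the class $\Phi_j(z_j)$ is the image of $\delta_{u_j}(s_j)\in H^1(\F_j,H^1(\P^1_{\cl{\F}},\mc{M}_0))$ under specialization into $H^1(D_j,H^1(\P^1_{\cl{K}},\mc{M}))$. Since $R_j\hookrightarrow R_i$ is \'etale, inertia at $\cl{\mathfrak{p}}$ is the same subgroup of $D_j$ and of $D_i$, so the inclusion $D_i\subset D_j$ descends to $G_{\F_i}\subset G_{\F_j}$ and the \Cref{7.9} specialization squares for $j$ and $i$ are compatible with Galois restriction. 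Hence $\Phi_i(z_j)$ is the image under specialization of $\on{res}_{G_{\F_j}}^{G_{\F_i}}\delta_{u_j}(s_j)\in H^1(\F_i,H^1(\P^1_{\cl{\F}},\mc{M}_0))$, which vanishes by condition (v) of \Cref{schoen-sequence} (read with target $\F_i$, the displayed self-map being clearly a typo).

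The main obstacle is the compatibility needed in the second part: one must verify that the specialization maps of \Cref{7.9} intertwine restriction along $D_i\subset D_j$ with restriction along $G_{\F_i}\subset G_{\F_j}$. This depends essentially on the \'etaleness of the tower $R_0\subset R_1\subset\cdots$, which keeps the inertia subgroup stable along the tower and identifies the decomposition-group quotients with the residue-field Galois groups.
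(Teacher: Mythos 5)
Your proof is correct and follows the same strategy as the paper: \Cref{5.4}, the specialization square extracted from \Cref{7.9}, and conditions (iii)--(v) of \Cref{schoen-sequence} give both the non-vanishing of $\Phi_i(z_i)$ and the vanishing of $\Phi_i(z_j)$ for $j<i$. You correctly flagged the typo in condition (v) (the target should be $\F_i$), and your verification that the inflation maps in \Cref{7.9} are compatible with restriction along the \'etale tower $R_0\subset R_1\subset\cdots$ (inertia at $\cl{\mathfrak p}$ being fixed) makes precise the paper's terser assertion that the relevant vertical map in (\ref{phi-comp}) kills $\Phi_j(z_j)$ --- that map is not literally zero, but it does vanish on the inflated image of $H^1(\F_j,H^1(\P^1_{\cl{\F}},\mc{M}_0))$, which is where $\Phi_j(z_j)$ lives.
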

	
	\begin{proof}
		By \Cref{5.4}(i), the class $\theta_{x_i}([z_i])\in H^2_{x_i}(\P^1_{\cl{K}},\mc{M})^{G_{K_i}}$ is not zero. Consider the following commutative square obtained from the commutative diagram (\ref{7.9}) (for $K=K_i$, $\mathfrak{p}=\mathfrak{p}_i$, $R=R_i$, $\F=\F_i$, $D=D_i$, $t=t_i$, $x=x_i$ and $u=u_i$):
		\begin{equation}\label{7.9-smaller}
			\begin{tikzcd}
				H^2_{x_i}(\P^1_{\cl{K}},\mc{M})^{G_{K_i}} \arrow[r, "\delta_{x_i}"] \arrow[d] & H^1(K_i,H^1(\P^1_{\cl{K}},\mc{M})) \arrow[d,"r_i"] \\
				H^2_{u_i}(\P^1_{\cl{\F}},\mc{M}_0)^{G_{\F_i}} \arrow[r,"\delta_{u_i}"] & H^1(D_i,H^1(\P^1_{\cl{K}},\mc{M})).
			\end{tikzcd}
		\end{equation}
		By \Cref{schoen-sequence}(iv) the map $\delta_{u_i}$ is non-zero, and by \Cref{schoen-sequence}(iii) the $(\Z/\ell)$-vector space $H^2_{u_i}(\P^1_{\overline{\F}},\mathcal{M}_0)^{G_{\F_i}}$ is one-dimensional. We deduce that $\delta_{u_i}$ is injective and, since $\theta_{x_i}([z_i])\neq 0$, that the left vertical map in (\ref{7.9-smaller}) is bijective. The commutativity of  (\ref{7.9-smaller}) now implies that $r_i\circ\delta_{x_i}$ is injective. Thus $r_i(\delta_{x_i}(\theta_{x_i}([z_i]))\neq 0$, and hence \Cref{5.4}(ii) gives
		\[\Phi_i(z_i)=r_i(\Delta_{K_i}(\alpha^2_{\ell,K_i}(z_i)))=r_i(\delta_{x_i}(\theta_{x_i}([z_i]))\neq 0.\]
		
		Suppose now that $1\leq j< i$. By \Cref{schoen-sequence}(v), the middle vertical map in (\ref{phi-comp}) is zero. The commutativity of (\ref{phi-comp}) now implies that $\Phi_i(z_j)=0$.
	\end{proof}

	The rest of this section is devoted to the construction of Schoen sequences. The proof makes use of arguments of \cite[Proposition (8.2)]{schoen2002complex} and \cite[(8.5)]{schoen2002complex}, which are based on results of \cite{schoen1999image}. 
	
	\begin{prop}\label{schoen-exists}
		Let $\ell>5$ be a prime number, and let $\mathfrak{p}$ be a finite place  of $K$ of residue characteristic $p\geq 5$ such that $p\neq \ell$. Then there exists a Schoen sequence for $\pi$ at $\mathfrak{p}$.
	\end{prop}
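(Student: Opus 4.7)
The plan is to construct the sequence $\set{(R_i, x_i, u_i)}_{i \geq 1}$ inductively, satisfying (i)--(vi) simultaneously. A useful preliminary reduction is the following: the $G_{\F}$-module $M \coloneqq H^1(\P^1_{\cl{\F}}, \mc{M}_0)$ is a finite $\F_\ell$-vector space, so its $G_\F$-action factors through a finite quotient. After fixing once and for all a finite unramified extension $\F' / \F$ trivializing this action, I would require every $\F_i$ to contain $\F'$; this gives condition (ii) immediately. With the action trivial, $H^1(\F_i, M) \cong \on{Hom}_{\on{cont}}(G_{\F_i}, M) \cong M$ (evaluation at Frobenius), and the restriction map $H^1(\F_j, M) \to H^1(\F_i, M)$ becomes multiplication by $[\F_i : \F_j]$. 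Since $M$ is $\ell$-torsion, demanding that $\ell \mid [\F_i : \F_{i-1}]$ at every step delivers (v) for free.

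For the inductive step, assume $(R_1,\ldots,R_{i-1})$ have been constructed. I would look for a closed point $u_i \in \mc{U}_\F$ with residue field exactly $\F_i$, where $\F_i / \F_{i-1}$ has degree divisible by $\ell$, such that $\Pi_\F^{-1}(u_i)$ is an elliptic curve with complex multiplication (which is needed to obtain CM lifts), $\mc{J}_\F$ is étale at $u_i$ (condition (i)), the invariant subspace $H^2_{u_i}(\P^1_{\cl{\F}}, \mc{M}_0)^{G_{\F_i}}$ is one-dimensional (condition (iii)), and $\delta_{u_i}$ is non-zero on that subspace (condition (iv)). The existence of $u_i$ rests on large-monodromy input: the local system $\on{Sym}^2 R^1(\Pi|_{\mc{U}})_*\Z/\ell(2)$ on $\mc{U}_{\cl{\F}}$ has big geometric monodromy (essentially a symmetric square of a finite-index subgroup of $\on{SL}_2(\Z/\ell)$, by Igusa/Deligne, provided $\ell > 5$), and Schoen's analysis in \cite[Prop.~(8.2), (8.5)]{schoen2002complex} (relying on \cite{schoen1999image}) computes the local cohomology $H^2_{u_i}(\P^1_{\cl{\F}}, \mc{M}_0)$ in terms of the Frobenius conjugacy class at $u_i$ inside this monodromy group and pins down which classes yield a one-dimensional invariant and a surjective $\delta$. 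One then applies the Chebotarev density theorem over $\F$ to produce infinitely many closed points of $\mc{U}_\F$ whose Frobenius lands in the prescribed conjugacy class; by choosing such a point with residue field generated by Frobenius of order a prescribed multiple of $\ell \cdot [\F_{i-1} : \F]$, one simultaneously arranges (v), (vi) and the cohomological conditions. Discarding the proper closed locus where $\mc{J}_\F$ is inseparable handles (i), and intersecting with the CM locus (infinite in $\mc{U}_{\cl{\F}}$) yields a CM elliptic curve at $u_i$.

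Once $u_i$ has been produced, the canonical Serre--Tate/Deuring lifting provides a CM elliptic curve over a discrete valuation ring $R_i$ étale over $R_{i-1}$, with residue field $\F_i$ at $\mathfrak{p}_i$, lifting $\Pi_\F^{-1}(u_i)$; identifying this lift with the corresponding fiber of $\Pi$ over some $t_i \in \mc{U}(R_i)$ gives the generic point $x_i \in U(K_i)$, which is automatically a CM point by construction. The main obstacle I anticipate is the simultaneous realization of (iii), (iv) and (vi): one must exhibit a Frobenius class in the $\on{Sym}^2$-monodromy group that both has the correct order modulo $\F_{i-1}$ and hits the non-degenerate stratum where $\delta_{u_i} \neq 0$. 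This is precisely where the hypothesis $\ell > 5$ is used rather than $\ell \geq 5$: for $\ell = 5$ the symmetric square of $\on{SL}_2(\F_5)$ is too small for Schoen's cohomology computation to produce the required Frobenius class, whereas for $\ell > 5$ every relevant cohomology-theoretic condition appears with positive Chebotarev density.
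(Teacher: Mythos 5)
Your proposal follows essentially the same route as the paper: trivialize the $G_{\F}$-action on $H^1(\P^1_{\cl{\F}},\mc{M}_0)$ so that (ii) holds by construction, force (v) by taking $\ell\mid[\F_i:\F_{i-1}]$, produce the $u_i$ via the function-field Chebotarev/Lang theorem applied to the Galois cover $\tilde{C}\to\P^1_{\F_1}$ arising from Schoen's monodromy analysis \cite{schoen1999image}, \cite[Prop.~(8.2), (8.5)]{schoen2002complex} (which is also where the hypothesis $\ell>5$ enters, via \cite[(11.2.7)]{schoen1999image}), and then lift to characteristic zero by Deuring's theorem. Two small caveats, neither fatal: over a finite field every ordinary elliptic curve has CM, so there is no sparse ``CM locus'' to intersect with---one only needs to avoid the supersingular $J$-invariants and the ramification locus of $\mc{J}_{\F}$---and condition (vi) (that $u_i$ is not defined over any proper subfield of $\F_i$) does not come for free from Chebotarev; the paper uses Lang's refined count $N'_f(\E)$ of primitive rational points \cite[p.~393]{lang1956series} to guarantee it.
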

	
	\begin{proof}
		Recall from \Cref{good-reduction-prop} that $\pi$ has good reduction at $\mathfrak{p}$. We first construct a tower of finite fields
		\[\F=\F_0\subset \F_1\subset \F_2\subset\cdots\]
		and $u_i\in \P^1_{\F_i}(\F_i)$ satisfying conditions (i)-(vi) of \Cref{schoen-sequence}.
		
		Let $T \subset \mc{U}_{\F}$ denote the union of the ramification locus of $\mc{J}_{\F}$ and the set of closed points $u$ such that the elliptic curve $\pi^{-1}(u)$ has supersingular $J$-invariant. By \Cref{good-reduction-prop}(2) and \cite[V.4.1(c)]{silverman2009arithmetic}, $T$ is finite. By \cite[Lemma (2.2)(ii)]{schoen2002complex}, for any finite extension $\F_i$ of $\F$, every $\F_i$-point of $(\mc{U}_{\F} - T)_{\F_i}$ is a CM point.
		
		Let $\E/\F$ be a finite field extension, let \[\kappa\colon G_{\E(\P^1)}\to \on{GL}_{\Z/\ell}(M)\] be the finite-dimensional Galois representation corresponding to the \'etale sheaf $\mc{M}_0$, let $\Gamma\subset \on{GL}_{\Z/\ell}(M)$ be the image of $\kappa$, and let $\rho\colon C\to\P^1_{\E}$ be a finite morphism from a smooth projective curve $C$ such that $\E(C)=\on{Ker}(\kappa)$. 
		
		By \cite[below (3.2.6)]{schoen2002complex}, we may choose $\F_1$ so that the hypotheses (3.2.1)-(3.2.6) of \cite[Section 3.2]{schoen1999image} are satisfied for every finite extension $\E/\F_1$. This means that for all finite extensions $\E/\F_1$ :
		\begin{enumerate}
			\item $\E$ is algebraically closed in $\E(C)$,
			\item $\E$ contains a primitive $\ell$-th root of unity,
			\item there is an $\E$-point $v$ of $\P^1_{\E}$ such that every point in $\rho^{-1}_{\E}(v)$ has degree $1$,
			\item for any two points $b_0,c_0\in \rho^{-1}_{\E}(v)$, the class of $b_0-c_0$ in $\on{Pic}(C)(\E)$ is divisible by $\ell$,
			\item the group $G_{\E}$ acts trivially on $H^1(C_{\cl{\F}},\mu_{\ell})$,
			\item if $c_0\in \rho_{\E}^{-1}(v)$ and $\breve{C}$ is defined by the Cartesian square (see \cite[(1.3.1)]{schoen1999image})
			\[
			\begin{tikzcd}
				\breve{C} \arrow[r] \arrow[d] & \on{Pic}^0(C)\arrow[d,"\times\ell"] \\
				C \arrow[r,"i_{c_0}"] & \on{Pic}^0(C),
			\end{tikzcd}
			\]
			where $i_{c_0}(c)\coloneqq \mc{O}_C(c-\deg(c)c_0)$, then the field extension $\E(\breve{C})/\E(\P^1)$ induced by the composition $\breve{C}\to C\to \P^1_{\E}$ is Galois (this is automatically true by \cite[\S 3.2, Lemma]{schoen1999image}) and for every element $\sigma$ of $\on{Gal}(\E(\breve{C})/\E(\P^1))$ there exists an $\E$-rational point $u$ of $(\mc{U}_{\F}-T)_{\E}$ such that $\rho$ is unramified at $u$ and $\sigma$ belongs to the the Frobenius conjugacy class of $x$.
		\end{enumerate}
		
		Once $\F_1$ as above is chosen, \cite[Lemma (4.4)]{schoen1999image} 
		and \cite[11.2.7]{schoen1999image} imply that for all finite extensions $\E/\F_1$ the Galois module $M$ satisfies the assumptions (3.3.1)-(3.3.6) of \cite[Section 3.3]{schoen1999image}:
		\begin{enumerate}[label=(\arabic*), start=7]
			\item The Galois module $M$ is tamely ramified,
			\item if $M^{\vee}\coloneqq \on{Hom}_{\Z}(M,\Z/\ell)$, then $(M^{\vee})^\Gamma=0$, 
			\item $M$ is an absolutely irreducible $(\Z/\ell)[\Gamma]$-module,
			\item $H^1(\Gamma,M)=0$,
			\item $H^1(\Gamma,M^{\vee})=0$, and
			\item there exists $\xi\in \Gamma$ of order prime to $\ell$ such that $M^{\ang{\xi}}$ is one-dimensional.
		\end{enumerate}
		The assumption $\ell>5$ is used here, in order to appeal to  \cite[11.2.7]{schoen1999image}. Let $\tilde{C} \to \P^1_{\F_1}$ be the Galois cover constructed in \cite[\S 7.1]{schoen1999image} (this uses property (12) above), and let $f \in \on{Gal}(\tilde{C}/\P^1_{\F_1})$ be the automorphism given by \cite[Proposition 7.2(iii)]{schoen1999image}. For every finite field extension $\F_1\subset \E\subset\cl{\F}$, we let
		\[
		\adjustbox{max width=\textwidth}{
			$N_f(\E)\coloneqq \set{u\in (\mc{U}_{\E}-T_{\E})(\E):\, \text{the Frobenius element of $u$ in  $\on{Gal}(\tilde{C}/\P^1_{\F_1})$ is conjugate to $f$}}.$ 
		}
		\]
		By \cite[Proposition 7.2(iv)]{schoen1999image}, all elements $u$ of $N_f(\E)$ satisfy properties (i)-(iv) of \Cref{schoen-sequence}:
		\begin{itemize}
			\item[(i)'] The morphism $\mc{J}_{\E}\colon \P^1_{\E} \to \P^1_{\E}$ is \'etale at $u$.
			\item[(ii)'] The $G_{\E}$-action on $H^1(\P^1_{\overline{\F}}, \mc{M}_0)$ is trivial.
			\item[(iii)'] The $(\Z/\ell)$-vector space $H^2_u(\P^1_{\overline{\F}}, \mc{M}_0)^{G_{\E}}$ is $1$-dimensional.
			\item[(iv)']The map $\delta_{u} \colon H^2_{u}(\P^1_{\overline{\F}}, \mc{M}_0)^{G_{\E}} \to H^1(\E, H^1(\P^1_{\overline{\F}}, \mc{M}_0))$ is non-zero.
		\end{itemize}
		Indeed, (i)' holds because $u$ does not belong to $T$, (ii)' holds for $\E=\F_1$ and hence for every finite extension of $\E/\F_1$, and (iii)' and (iv)' are implied by \cite[Proposition 7.2(iv)]{schoen1999image}.
		
		We write $c(f)\subset \on{Gal}(\tilde{C}/\P^1_{\F_1})$ for the  conjugacy class of $f$. If $\E/\F_1$ is a finite field extension, we write $m$ for the degree $[\E:\F_i]$. In  \cite[p. 393]{lang1956series}, Lang obtained the following estimate for $|N_f(\E)|$:
		\[|N_f(\E)| = \left(\frac{|c(f)|}{|\text{Gal}(\tilde{C}/\P^1_{\F_1})|}\right) \cdot (q^{m} + O(q^{m/2})),\qquad m\to\infty.\]
		In the formula in \cite[p. 393]{lang1956series} the fraction $\frac{h}{n}$ should be $\frac{n}{h}$.
		
		Let $N'_f(\E)\subset N_f(\E)$ be the subset of those $\E$-points $u$ satisfying (vi) of \Cref{schoen-sequence}:
		\begin{itemize}
			\item[(vi)'] $u$ is not defined over any proper subfield of $\E$.
		\end{itemize}
		As noted by Lang in the paragraph below his formula, we also have:
		\[|N'_f(\E)| = \left(\frac{|c(f)|}{|\text{Gal}(\tilde{C}/\P^1_{\F_1})|}\right) \cdot (q^{m} + O(q^{m/2})),\qquad m\to\infty.\]
		Thus, enlarging $\F_1$ if necessary, we can suppose that $N'_f(\E)$ is non-empty for every finite extension $\E/\F_1$. (Enlarging $\F_1$ is allowed because, as we explained above, any finite extension of $\F_1$ satisfies (1)-(12).) 
		
		To arrange that (v) is satisfied, it will suffice to choose the $\F_i$ such that $[\F_{i+1} : \F_i]$ is divisible by $\ell$ for each $i$. Indeed, $G_{\F_1}$ acts trivially on $N := H^1(\P^1_{\cl{\F}}, \mathcal{M}_0)$, so
		\[\text{Hom}(G_{\F_j}, N) \rightarrow \text{Hom}(G_{\F_i}, N)\]
		is the zero map when $\ell$ divides $[\F_j : \F_i]$. One may therefore construct the sequences of $\F_i$ and $u_i$ as follows: 
		\begin{itemize}
			\item[--] pick $\F_1$ such that $[\F_1:\F]$ is divisible by $\ell$ and $N'_f(\E)$ is non-empty for every finite extension $\E/\F_1$,
			\item[--] choose any sequence $\F_1\subset \F_2\subset\dots$ of finite field extensions such that $[\F_{i+1}:\F_i]$ is divisible by $\ell$ for all $i\geq i$, and
			\item[--] pick any $u_i\in N_f(\F_i)$ for all $i\geq 1$.
		\end{itemize} 
		
		We now complete the proof by showing that the $\F_i$ and $u_i$ are part of a Schoen sequence $\set{(R_i,x_i,u_i)}_{i\geq 0}$. That is, we construct an etale tower of discrete valuation rings
		\[R=R_0 \subset R_1 \subset R_2 \subset \cdots\]
		contained in $\cl{K}$ such that
		\begin{itemize}
			\item[--] the residue field of $R_i$ is $\F_i$, and
			\item[--] $u_i \in \mc{U}_{\F_i}$ lifts to a CM point $x_i \in U_{K_i}$, where $K_i$ denotes the fraction field of $R_i$.
		\end{itemize}
		
		Let $i \geq 1$. We assume that $R_{i-1}$ has already been constructed, and construct $R_i$. By induction, this will complete the proof. This construction relies on a result of Deuring on liftings of CM $J$-invariants from finite fields to number fields, which we now recall. 
		
		Since $u_i$ is a CM point, $\Lambda_i \coloneqq \on{End}(\Pi_{\F_i}^{-1}(u_i))$ is an order in the imaginary quadratic number field $F_i \coloneqq \Lambda_i \otimes_{\Z} \mathbb{Q}$. We write $J(\Lambda)$ for the $J$-invariant of the lattice $\Lambda$ and $H_i\coloneqq F_i(J(\Lambda_i))$ for the corresponding class field, which by \cite[Theorem 10.3.4]{lang1987elliptic} is a finite abelian extension of $F_i$. We write $\mathcal{O}_i$ for the ring of integers in $H_i$.
		
		\begin{prop}[Deuring]\label{deuring}
			For every prime $\wp \subset \mathcal{O}_i$ be a prime dividing $p$:
			\begin{enumerate}
				\item $\wp$ is unramified over $p$, and the residue field of $\wp$ embeds in $\F_i$, and
				\item there is an elliptic curve $E_i$ defined over $H_i$ such that $\on{End}(E_i)\cong \Lambda_i$, with good reduction at $\wp$, and such that $J(E)$ specializes to $J(\Pi_{\F_i}^{-1}(u_i))$, and
				\item if $E'_i$ is another elliptic curve over $H_i$ satisfying (2), then $J(E'_i)=J(E_i)$.
			\end{enumerate}
		\end{prop} 
		
		For the proof of \Cref{deuring}, see  \cite[Proposition (8.4)]{schoen2002complex}. Letting $\wp\subset \mc{O}_i$ be a prime dividing $p$, we obtain a commutative diagram
		\[
		\begin{tikzcd}
			\Spec(\F_i) \arrow[r,hook,"j_i"] \arrow[d] & \P^1_{\F_i} \arrow[d]\arrow[r]  & \Spec(\F_i) \arrow[d] \\
			\Spec(\mc{O}_i)    \arrow[r,hook,"J_i"] & \P^1_{\mc{O}_i} \arrow[r] & \Spec(\mc{O}_i),
		\end{tikzcd}
		\]
		where each square is cartesian, the vertical maps are induced by the composition $\mc{O}_i\to \mc{O}_i/\wp \hookrightarrow \F_i$ given by \Cref{deuring}(1), the image of $j_i$ is the $J$-invariant of $\Pi^{-1}_{\F_i}(u_i)$, and $J_i(\Spec H_i)$ is the $J$-invariant of an elliptic curve $E_i/H_i$ as in \Cref{deuring}(2). The composition of the top (resp. bottom) horizontal maps is equal to the identity of $\Spec(\F_i)$ (resp. $\on{Spec}(\mc{O}_i)$). In particular, $j_i$ and $J_i$ are closed embeddings.
		
		Choose a discrete valuation ring $R_i'$ with fraction field $H_iK_{i-1} \subset \cl{K}$ corresponding to a place above $\mathfrak{p}_{i-1}$, and write $\F'_i$ for the residue field of $R'_i$. Note that $\F_i'$ embeds in $\F_i$ because $\F_{i-1}$ and $\mathcal{O}_i/\wp$ embed in $\F_i$. We thus obtain a commutative diagram
		\begin{equation}\label{r-i-prime-diag}
			\begin{tikzcd}
				\Spec(\F_i) \arrow[r,hook,"j_i"] \arrow[d] & \P^1_{\F_i} \arrow[d]\arrow[r]  & \Spec(\F_i) \arrow[d] \\
				\Spec(R_i')    \arrow[r,hook,"J'_i"] & \P^1_{R_i'} \arrow[r] & \Spec(R_i'),
			\end{tikzcd}
		\end{equation}
		where the composition of the horizontal maps are identity maps, the squares are cartesian, and the closed embedding $J_i'$ is the pullback of $J_i$ along the morphism $\Spec(R'_i)\to \Spec(\mc{O}_i)$. 
		
		Consider the following commutative diagram with cartesian squares:
		\begin{equation}\label{cartesian}
			\begin{tikzcd}
				\cl{S}_i \arrow[r] \arrow[d] &  S_i \arrow[r,hook]\arrow[d]  & \P^1_{R'_{i-1}} \arrow[d,"\mc{J}_{R'_{i-1}}"] \\
				\Spec(\F_i) \arrow[r] & \Spec(R_i') \arrow[r,hook,"J_i'"] & \P^1_{R'_{i-1}}.
			\end{tikzcd}
		\end{equation}
		By (\ref{r-i-prime-diag}), the composition of the bottom horizontal maps is $j_i$. Therefore, since (\ref{cartesian}) is cartesian, an $\F_i$-point $v$ of $\P^1_{R'_{i-1}}$ factors through $\cl{S}_i$ if and only $\mc{J}_{\F_i}(\P^{-1}_{\F_i}(v))$ is equal to the image of $j_i$, that is, to the $J$-invariant of $\Pi^{-1}_{\F_i}(u_i)$. In particular, $u_i\colon \Spec(\F_i)\to \P^1_{R_{i-1}'}$ factors through $\cl{S}_i$.
		
		By \Cref{good-reduction-prop}(1), the morphism $\mc{J}_{R'_{i-1}}$ is finite. Thus $S_i$ is finite over $R_i'$, and in particular it is semilocal. By property (i) of \Cref{schoen-sequence}, the morphism $\mc{J}_{\F_i}$ is \'etale at $u_i$. Thus, by the openness of the \'etale locus, we may find an affine open subscheme $\Spec(R_i)\subset S_i$ containing the image of $u_i\colon \Spec(\F_i)\to S_i$ as its only closed point and such that the restriction of $\mc{J}_{R'_{i-1}}$ to $\Spec(R_i)$ is \'etale. It remains to show that $R_i$ satisfies the required properties.
		
		We first show that the inclusion $R_{i-1}\subset R_i$ is \'etale. We have the inclusions $R_{i-1}\subset R'_{i-1}\subset R_i$. We have just showed that the inclusion $R'_{i-1}\subset R_i$ is \'etale. By \Cref{deuring}(1), the inclusion $\Z_{(p)}\subset \mc{O}_i$ is \'etale, hence, so is the inclusion $R_{i-1}\subset R'_{i-1}$. Therefore the inclusion $R_{i-1}\subset R_i$ is also \'etale, as desired.
		
		We now show that the residue field of $R_i$ is equal to $\F_i$. This is true because by (\ref{cartesian}) the residue field of $R_i$ is equal to the image of the morphism $u_i\colon \Spec(\F_i)\to \P^1_{R_{i-1}'}$, and by property (vi) of \Cref{schoen-sequence} the latter is equal to $\F_i$. 
		
		Finally, we let $K_i$ be the fraction field of $R_i$, and we show that $u_i$ lifts to a CM point over $K_i$. Indeed, the composition $\Spec(K_i)\to\Spec(R_i)\to \P^1_{R'_{i-1}}$ gives rise to a $K_i$-point $x_i$ of $\P^1_{K_i}$ which lifts $u_i$. The $K_i$-point $x_i$ is a CM point by \cite[Lemma (2.2)(i)]{schoen2002complex}. 
		
		Thus $R_i$ satisfies all the required properties. By induction on $i$, this completes the proof.
	\end{proof}
	
	\section{Proof of Theorem \ref{thm-fermat}}

	\begin{prop}\label{w-infinite}
		We maintain the notation introduced in \Cref{construction}. Moreover, we suppose that $k=K=\Q(\zeta)$, where $\zeta$ is a primitive third root of unity. We let $\ell>5$ be a prime number such that $\ell\equiv 2\pmod 3$. Then the group $CH^2(W_{\cl{\Q}})/\ell$ is infinite.
	\end{prop}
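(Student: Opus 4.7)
The plan is to construct infinitely many $\F_\ell$-linearly independent classes in $CH^2(W_{\cl{\Q}})/\ell$ by combining the Schoen sequence produced by \Cref{schoen-exists} with the non-vanishing/vanishing information for the maps $\Phi_i$ provided by \Cref{phi}.

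First I fix a rational prime $p\geq 5$ with $p\neq \ell$ (for instance $p=7$) and a finite place $\mathfrak{p}$ of $K$ above $p$. By \Cref{schoen-exists} there is a Schoen sequence $\set{(R_i,x_i,u_i)}_{i\geq 0}$ for $\pi$ at $\mathfrak{p}$. For each $i\geq 1$ I choose a CM cycle $z_i\in Z^2(W_{K_i})$ supported on $q^{-1}(x_i)$; by \Cref{cm-homologically-trivial} we have $z_i\in Z^2_{\on{hom},\ell}(W_{K_i})$, and pulling back I regard $z_i$ as an element of $Z^2_{\on{hom},\ell}(W_{K_n})$ for every $n\geq i$. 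The goal is to show that the classes $[z_i]\in CH^2(W_{\cl{\Q}})/\ell$ are $\F_\ell$-linearly independent, which immediately gives that $CH^2(W_{\cl{\Q}})/\ell$ is infinite.

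Assuming the contrary, there exist integers $c_1,\dots,c_n$, not all divisible by $\ell$, with $\sum_{i=1}^n c_i[z_i]=0$ in $CH^2(W_{\cl{\Q}})/\ell$; by discarding trailing terms I may arrange that $c_n\not\equiv 0\pmod{\ell}$. Let $z\coloneqq\sum_{i=1}^n c_iz_i\in Z^2_{\on{hom},\ell}(W_{K_n})$. Since $[z]=0$ in $CH^2(W_{\cl{\Q}})/\ell$, the contrapositive of \Cref{6.2} (applied with $K_1$ replaced by $K_n$) forces the image of $\alpha^2_{\ell,K_n}(z)$ in $J_\ell^2(W)^{G_{K_n}}/\ell$ to vanish. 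By Schoen's Lemma 3.1 (cited just below diagram~(\ref{abel-jacobi-diagram})) the natural map $CH^2_{\on{hom},\ell}(W_{K_n})\to J_\ell^2(W)^{G_{K_n}}$ is an isomorphism, so reducing modulo $\ell$ this tells us that the class of $z$ in $CH^2_{\on{hom},\ell}(W_{K_n})/\ell$ is zero. Because $\mc{M}$ is a $\Z/\ell$-sheaf, the target $H^1(D_n,H^1(\P^1_{\cl{K}},\mc{M}))$ of $\Phi_n$ is $\ell$-torsion, so $\Phi_n$ factors through $CH^2_{\on{hom},\ell}(W_{K_n})/\ell$; hence $\Phi_n(z)=0$.

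On the other hand, by \Cref{phi} we have $\Phi_n(z_j)=0$ for every $j<n$ and $\Phi_n(z_n)\neq 0$, so the $\Z$-linearity of $\Phi_n$ yields $\Phi_n(z)=c_n\,\Phi_n(z_n)$. Since $\Phi_n(z_n)$ is a non-zero element of an $\F_\ell$-vector space and $c_n\not\equiv 0\pmod{\ell}$, this element is non-zero, contradicting the previous paragraph. I expect the main subtlety to lie in the factorization step identifying $CH^2_{\on{hom},\ell}(W_{K_n})/\ell$ with $J_\ell^2(W)^{G_{K_n}}/\ell$, so that \Cref{6.2} and \Cref{phi} can be played off against each other on the same group; everything else is formal given the machinery assembled in the preceding sections.
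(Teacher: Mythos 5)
Your overall strategy is the paper's -- build a Schoen sequence, attach a CM cycle $z_i$ to each $x_i$, and use the maps $\Phi_i$ to extract a contradiction -- and you have simply run the chain of implications in the contrapositive direction. The directions are logically equivalent, so nothing is wrong with the structure. However, your middle step contains a genuine error that you yourself flag as "the main subtlety": you cite Schoen's Lemma~3.1 to assert that the Abel--Jacobi map $\alpha^2_{\ell,K_n}\colon CH^2_{\on{hom},\ell}(W_{K_n})\to J_\ell^2(W)^{G_{K_n}}$ is an isomorphism. That is not what Lemma~3.1 says. Lemma~3.1 is a statement in Galois cohomology: it identifies $H^1(K_n,H^3(W_{\cl{K}},\Z_\ell(2))/\on{tors})$ with $J_\ell^2(W)^{G_{K_n}}$ via the restriction map on the coefficients of the intermediate Jacobian. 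It says nothing about the Abel--Jacobi map being surjective (that would be an open, deep conjectural statement). Consequently, your inference that "$[z]=0$ in $CH^2_{\on{hom},\ell}(W_{K_n})/\ell$" does not follow.

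The good news is that you do not need this identification at all: the conclusion $\Phi_n(z)=0$ follows more directly. By \cite[Proposition~(4.15)]{schoen2002complex}, the $\Z_\ell$-module $H^3(W_{\cl{K}},\Z_\ell(2))$ is torsion-free, so by Lemma~3.1 the group $J_\ell^2(W)^{G_{K_n}}$ is literally $H^1(K_n,H^3(W_{\cl{K}},\Z_\ell(2)))$. Hence "the image of $\alpha^2_{\ell,K_n}(z)$ in $J_\ell^2(W)^{G_{K_n}}/\ell$ vanishes" means precisely that $\alpha^2_{\ell,K_n}(z)$ is $\ell$-divisible in $H^1(K_n,H^3(W_{\cl{K}},\Z_\ell(2)))$. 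Since $\Phi_n=r_n\circ\Delta_{K_n}\circ\alpha^2_{\ell,K_n}$ and the target $H^1(D_n,H^1(\P^1_{\cl{K}},\mc{M}))$ is killed by $\ell$, this immediately gives $\Phi_n(z)=0$, and the rest of your argument closes the contradiction as intended. For comparison, the paper runs the same argument in the forward direction (from $\Phi_n(z)\neq 0$, via the factorization of $\Phi_n$, to the non-$\ell$-divisibility of $\alpha^2_{\ell,K_n}(z)$, then through diagram~(\ref{abel-jacobi-diagram}) to \Cref{6.2}), which avoids ever needing to contemplate what $\alpha^2_{\ell,K_n}$ does to the Chow group and so makes the misstep you made harder to commit.
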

	
	\begin{proof}
		Let $p\geq 5$ be a rational prime different from $\ell$, let $\mathfrak{p}$ be a place of $K$ above $p$, let $R\subset K$ be the valuation ring at $\mathfrak{p}$ and let $\F\coloneqq R/\mathfrak{p}$ be the residue field. Since $p\geq 5$, we know from \Cref{good-reduction-prop} that $\pi$ has good reduction at $\mathfrak{p}$.
		
		By \Cref{schoen-exists}, there exists a Schoen sequence $\set{(R_i,x_i,u_i)}_{i\geq 1}$ for $\pi$ at $\mathfrak{p}$. For all $i\geq 0$, let $K_i$ be the fraction field of $R_i$, let $\F_i$ be the residue field of $R_i$, and let $z_i\in Z^2_{\text{hom},\ell}(W_{K_i})$ denote a CM cycle supported on $q^{-1}(x_i)$. We aim to show that the images of the $z_i$ in $CH^2(W_{\cl{K}})/\ell$ form a linearly independent subset. Suppose the contrary. Then there exists a $\Z$-linear combination 
		\[z = \sum_{i=1}^n a_i z_i \in Z^2_{\mathrm{hom},\ell}(W_{K_n})\] such the $a_i$ are integers, the class of $z$ in $CH^2_{\mathrm{hom},\ell}(W_{\cl{K}})$ is divisible by $\ell$, but the integer $a_n$ is not divisible by $\ell$. By \Cref{phi}, we have 
		\[\Phi_n(z)=\Phi_n(a_nz_n)=a_n\Phi_n(z_n)\neq 0.\]
		Since $\Phi_n$ factors through $\alpha^2_{\ell,K_n}$, we deduce that $\alpha^2_{\ell,K_n}(z)$ is not divisible by $\ell$ in $H^1(K_n, H^3(W_{\cl{K}},\Z_{\ell}(2)))$. (Recall that $H^3(W_{\cl{K}},\Z_{\ell}(2))$ is torsion free by \cite[Proposition (4.15)]{schoen2002complex}.) 
		From (\ref{abel-jacobi-diagram}), we deduce that $\alpha^2_{\ell}(z)$ is not divisible by $\ell$ in $J^2_{\ell}(W_{\cl{K}})^{G_{K_n}}$. By \Cref{6.2}, this implies that the class of $z$ in $CH^2(W_{\cl{K}})$ is not divisible by $\ell$, a contradiction. Therefore the images of the $z_i$ in $CH^2(W_{\cl{K}})/\ell$ are linearly independent modulo $\ell$, as desired.
	\end{proof}

	\begin{proof}[Proof of \Cref{thm-fermat}]
		Let $F$ be an algebraically closed field of characteristic zero. By the Rigidity Theorem of Lecomte and Suslin \cite[Th\'eor\`eme 3.11]{lecomte1986rigidite}, the pullback map $CH^2(E^3_{\cl{\Q}})/\ell\to CH^2(E^3_F)/\ell$ is an isomorphism. We may thus assume that $F=\cl{\Q}$. 
		
		Let $k=\Q(\zeta)$, where $\zeta$ is a primitive third root of unity, let $W$ be the threefold over $k$ defined in \Cref{construction}, and fix a prime number $\ell>5$. By \Cref{schoen-thm}, we may suppose that $\ell\equiv 2\pmod 3$. 
		
		There exists a dominant rational map $f\colon E^3_{\cl{\Q}} \dashrightarrow W_{\cl{\Q}}$ of $3$-power degree; see \cite[Proposition (10.2)]{schoen2002complex}. There exist a smooth projective $\cl{\Q}$-variety $T$, a morphism $g\colon T\to E^3_{\cl{\Q}}$ given by the composition of a sequence of blow-ups at smooth centers, and a morphism $h\colon T\to W_{\cl{\Q}}$ such that $f\circ g=h$. 
		
		Since $\ell\neq 3$, by the projection formula for Chow groups the homomorphism $h^*\colon CH^2(W_{\cl{\Q}})/\ell\to CH^2(T)/\ell$ is injective. Since $\ell>5$, the group $CH^2(W_{\cl{\Q}})/\ell$ is infinite by \Cref{w-infinite}, hence so is $CH^2(T)/\ell$.
		
		The morphism $g$ is a composition of blow-ups at smooth points and curves. Thus by \cite[Proposition 6.7(e)]{fulton1998intersection} the pullback $g^*\colon CH^2(E^3_{\cl{\Q}})/\ell\to CH^2(T)/\ell$ has finite cokernel. We conclude that $CH^2(E^3_{\cl{\Q}})/\ell$ is infinite, as desired. 
	\end{proof}

	\begin{proof}[Proof of \Cref{standard-cor2}]
		To prove \Cref{standard-cor2}, let $Y$ be a smooth projective $(n-1)$-fold over $\Q$ satisfying the conclusion of \Cref{standard-cor}, let $C$ be an elliptic curve over $\Q(t)$ whose $J$-invariant is transcendental over $\Q$, and set $X\coloneqq Y_{\Q(t)}\times_{\Q(t)} C$. By a theorem of Schoen \cite[Theorem (0.2)]{schoen2000certain}, the exterior product map gives an injection $CH^i(Y_{\cl{\Q}})/\ell\hookrightarrow CH^{i+1}(X_{\cl{\Q(t)}})[\ell]$ for all prime numbers $\ell$, and so $X$ satisfies the conclusion of \Cref{standard-cor2}.
	\end{proof}
	
	\begin{proof}[Proof of \Cref{cor3}]
		To prove \Cref{cor3}, we first recall that, by a theorem of Alexandrou \cite[Theorem 1.3]{alexandrou2023torsion}, for every integer $\ell\geq 2$ there exists a smooth complex projective surface $S$ such that the torsion subgroup of N\'eron-Severi group $\on{NS}(S)_{\on{tors}}$ is cyclic of order $\ell$ and, for every smooth projective complex variety $Z$, the exterior product with any generator of $\on{NS}(S)_{\on{tors}}$ gives an injective group homomorphism $\on{Griff}^i(Z)/\ell\hookrightarrow \on{Griff}^i(S\times_{\C} Z)[\ell]/\ell \on{Griff}^i(S\times_{\C} Z)[\ell^2]$. When $\ell=2$, this is due to Schreieder \cite[Theorem 1.3]{schreieder2020infinite}. 
		
		Inspection of Alexandrou's construction, specifically of the proofs of \cite[Lemma 4.2, Theorem 4.1]{alexandrou2023torsion}, shows that there exist a surface $S_0$ over $K$ and a field inclusion $K\subset\C$ such that $(S_0)_{\C}\cong S$. By the Rigidity Theorem of Lecomte and Suslin \cite{lecomte1986rigidite}, the injectivity of the exterior product with any generator of $\on{NS}(S)_{\on{tors}}$ implies the same for $S_0$: for every smooth projective variety $Z$ over $\cl{K}$ the exterior product with any generator of $\on{NS}((S_0)_{\cl{K}})_{\on{tors}}\cong \on{NS}(S)_{\on{tors}}$ gives an injective group homomorphism $\on{Griff}^i(Z)/\ell\hookrightarrow \on{Griff}^i((S_0)_{\cl{K}}\times_{\cl{K}}Z)[\ell]/\ell \on{Griff}^i((S_0)_{\cl{K}}\times_{\cl{K}}Z)[\ell^2]$. Now, if $\ell>5$ is a prime and $E/\Q$ is the Fermat cubic, \Cref{thm-fermat} implies that $\on{Griff}^2(E^3_{\cl{K}})/\ell$ is infinite, and hence  $X=(E^3)_K\times_K\P^{n-5}_K\times_KS$ satisfies the conclusion of \Cref{cor3}.
	\end{proof}

	\section{Acknowledgements}
	
	I thank Akhil Mathew and Madhav Nori for an inspiring conversation, held in Chicago in April 2023, on the topic of \cite{farb2021prismatic} and Brosnan's Conjecture, and Burt Totaro for his interest in this work and for helpful comments. I thank Stefan Schreieder for his suggestion to include \Cref{cor3} in the paper.
	

\end{document}